\newtheorem{thm}{Theorem}[section]
\newtheorem{defi}[thm]{Definition}
\newtheorem{example}[thm]{Example}
\newtheorem{remark}[thm]{Remark}
\newtheorem{prop}[thm]{Proposition}
\theoremstyle{remark}
\newcommand{\Real}{\mathbb{R}}      
\begin{document}

\nocite{*}

\title{A Multigrid Optimization Algorithm for the Numerical Solution of Quasilinear Variational Inequalities Involving the $p$-Laplacian\thanks{Supported in part by the Escuela Polit\'ecnica Nacional del Ecuador, under the project PIMI 14-12 ``Viscoplastic Fluids in Food Industry'' and the MATH-AmSud Project ``SOCDE-Sparse Optimal Control of Differential Equations: Algorithms and Applications''. This paper was developed within the Master Program in Optimization of the Mathematics Department at Escuela Polit\'ecnica Nacional del Ecuador.}}

\author{Sergio~Gonz\'alez-Andrade and Sof\'ia L\'opez\\\small Research Center on Mathematical Modeling (MODEMAT) and \\\small Departamento de Matem\'atica -  Escuela Polit\'ecnica Nacional\\\small Ladr\'on de
Guevara E11-253, Quito 170413, Ecuador\\\small \tt sergio.gonzalez@epn.edu.ec and sofia.lopezo@epn.edu.ec} 
\date{\today}
\maketitle

\begin{abstract}
In this paper we propose a multigrid optimization algorithm (MG/OPT) for the numerical solution of a class of quasilinear variational inequalities of the second kind. This approach is enabled by  the fact that the solution of the variational inequality is given by the minimizer of a nonsmooth energy functional, involving the $p$-Laplace operator. We propose a Huber regularization of the functional and a finite element discretization for the problem. Further, we analyze the regularity of the discretized energy functional, and we are able to prove that its Jacobian is slantly differentiable. This regularity property is useful to analyze the convergence of the MG/OPT algorithm. In fact, we demostrate that the algorithm is globally convergent by using a mean value theorem for semismooth functions. Finally, we apply the MG/OPT algorithm to the numerical simulation of the viscoplastic flow of Bingham, Casson and Herschel-Bulkley fluids in a pipe. Several experiments are carried out to show the efficiency of the proposed algorithm when solving this kind of fluid mechanics problems.
  \vskip .2in

\noindent {\bf Keywords: } Multigrid methods. Variational inequalities of the second kind. $p$-Laplacian. Preconditioned descent algorithms. Viscoplastic Fluids.
\vspace{0.2cm}\\
\noindent {\bf AMS Subject Classification: } 65N55, 65K15, 65N30, 76A05.\\
\end{abstract}

\section{Introduction}
In this paper, we focus on the development of a multigrid algorithm for the fast finite element solution of a class of quasilinear variational inequalities of the second kind. The main idea is the application of an efficient multigrid approach to this kind of problems which, typically, leads us to the solution of large systems.

Let $\Omega$ be an open and bounded set in $\Real^n$ with Lipschitz boundary $\partial \Omega$. We are concerned with the numerical solution of the following class of quasilinear variational inequalities of the second kind: find $u \in W_0^{1,p}(\Omega)$ such that 
\begin{equation*}
\int_{\Omega} | \nabla u|^{p-2} ( \nabla u,\nabla (v-u) )\,dx +g \int_{\Omega} | \nabla v |\,dx-g \int_{\Omega} | \nabla u |\,dx\geq \int_\Omega f (v-u)\,dx, \, \forall v \in W_0^{1,p}(\Omega),
\end{equation*}
where $1 < p < \infty$, $g > 0$ and $f \in L^q(\Omega).$ Here, $q=\frac{p}{p-1}$ stands for the conjugate exponent of $p$. It is known that these variational inequalities correspond to a first order necessary optimality condition for the following class of nonsmooth optimization problems.
\begin{equation}\label{eq:prob}
\underset{u\in W_0^{1,p}(\Omega)}{\min} J(u):= \frac{1}{p}\int_\Omega |\nabla u|^p\,dx + g\int_\Omega |\nabla u|\, dx - \int_\Omega f u\,dx.
\end{equation}
Consequently, we focus on the fast solution of this optimization problem. The existence and uniqueness of solutions for this problem has been analyzed and verified in previous contributions, such as \cite{Gonzalez1}. 

The variational inequalities under study provide a versatile tool in the study of a class of free boundary problems which arise in the modelling of complex fluids and materials. In fact, diverse problems including the flow of viscoplastic materials, the flow of electro- and magneto-rheological fluids and phenomena in glaciology have been successfully simulated by this kind of models (\cite{Gonzalez1,Ruzicka,Georg1}). 

Several approaches have been proposed for the numerical solution of problems like \eqref{eq:prob}. In \cite{Huilgol} an Augmented Lagrangian method is implemented for the numerical simulation of the flow of viscoplastic materials. In \cite{Gonzalez1}, a preconditioned descent algorithm is proposed and analyzed both in finite and infinite dimension settings. Regarding the use of multigrid algorithms, in \cite{korn1,korn2} the author proposes algorithms for variational inequalities of the first and the second kind, based on extended relaxation methods. In \cite{korn3}, the author proposes a multigrid algorithm for variational inequalities of the second kind using a combination of convex minimization with constrained Newton linearization. However, these contributions focus on variational inequalities involving linear elliptic operators such as the Laplacian.

The multigrid approach is a very appealing way to develop fast solution algorithms for the numerical approximation of \eqref{eq:prob}. In fact, the numerical solution of this kind of problems usually involves the resolution of large linear and nonlinear systems. Since these systems are computationally expensive to solve, the multigrid algorithms provide an efficient way to handle the large systems generated when discretizing the problem. Furthermore, it is natural to look for an algorithm which, in the context of the multigrid approximation, focus on the direct optimization of the energy functional. 

The multigrid optimization method (MG/OPT) corresponds to a nonlinear programming adaptation of the \textit{full approximation storage} (FAS) scheme. This approach is proposed, for instance in \cite{Lewis_Nash,Nash,BorziVallejo}, as an effective tool for large scale optimization problems. This algorithm works with different discretization levels of the optimization problem and takes advantage of the coarse problems to generate search directions for the finer problems. Similar approaches have been used for problems involving quasilinear operators, such as the $p$-Laplace operator (see \cite{Bermejo} and the references therein), but to the best of our knowledge, there are no contributions proposing a MG/OPT algorithm for variational inequalities of the second kind involving this kind of operators.

In this paper, we propose and analyze an MG/OPT algorithm to compute the finite element solution of a Huber regularized version of \eqref{eq:prob}. Considering the structure of the optimization problem, specifically the low regularity of the functional, we use a class of preconditioned descent algorithms proposed in \cite{Gonzalez1} as underlying optimization methods or smoothers. Further, the low regularity of the functional prevents us from doing a classical analysis of convergence. Therefore, we perform the convergence analysis of the MG/OPT algorithm by using a mean value theorem for \textit{Bouligand} differentiable functions, which is also applicable for semismooth functions. Finally, we present a comprehensive numerical experimentation focused on the numerical simulation of viscoplastic materials. Specifically, we focus on the flow of these materials through the cross-section of a pipe. 

Let us mention that, although the method developed in this article is concerned with variational inequalities of the second kind involving the nonsmooth term $\int_\Omega|\nabla u|\,dx$, the results can be extended to other variational inequalities of the second kind.

The paper is organized as follows: In section \ref{sec:prelim}, we present several results on generalized differentiability, which will be used to analyze the convergence of the multigrid method.  Since the problem is nonsmooth, in section \ref{sec:reganddisc} we propose a local regularization for the objective functional. Further, we present the finite element discretization of the problem. The MG/OPT method is presented in section \ref{sec:MGOPT}, whereas the convergence of the algorithm is discussed in section \ref{sec:convergence}. In section \ref{sec:impl}, a brief explanation of the underlying optimization and line search algorithms is presented. In section \ref{sec:flow}, we analyze the behaviour of the proposed methodology when applied to the numerical simulation of viscoplastic flow. We perform several experiments in order to show the main features of the algorithm. Finally, in section \ref{sec:conclusions}, we outline conclusions on this work and discuss future contributions.

\section{Preliminaries on Generalized Differentiability}\label{sec:prelim}
This section is devoted to the discussion of several concepts on generalized differentiability. We introduce the \textit{Bouligand} and the slant derivative of a nonsmooth function, and we discuss the relationship between these two concepts. Further, we present a mean value theorem for \textit{Bouligand} differentiable functions which also holds for semismooth functions. 
 
\begin{defi}
Let $X$ and $Y$ be two normed spaces, $D$ be a nonempty open set in $X$ and $J: D \subset X \rightarrow Y$ be a given mapping. For $x \in D$ and $h \in X$, if the limit
\begin{equation*}
J'(x)(h):=\lim_{t \rightarrow 0^{+}} \frac{J(x+th)-J(x)}{t}
\end{equation*}
exists, the function is said to be directionally differentiable. Further, $J'(x)(h)$ is the directional derivative of $J$ at $x$ in the direction $h$.
\end{defi}
\begin{remark}
From here on, by making a small notation abuse, we denote by $F '(u)$ the Fr\'echet derivative of $F$ at $u$, and by  $F'(u)(v)$ the directional derivative of $F$ at $u$ in the direction $v$.
\end{remark}

Next, we define the concept of \textit{Bouligand} differentiability and its relation to the semismoothnes concept. For further details, we refer the reader to \cite[Ch. 2, Sec. 2.1]{Ulbrich}.
\begin{defi}
 Let $D \subset \mathbb{R}^{n}$ be open and $J:D \rightarrow \mathbb{R}^{m}$ be Lipschitz continuos near $x \in D$, i.e., locally Lipschitz continuous. The set
\[
\partial_{B} J(x)=\left\lbrace M \in \mathbb{R}^{m \times n}: \exists (x_k) \subset D_J : x_k \rightarrow x, J'(x_k) \rightarrow M \right\rbrace
\]
is called Bouligand-subdifferential (or B-subdifferential) of $J$ at $x$. Here, $D_J \subset D$ is the set of all $x \in D$ at which $J$ admits a Fr\'echet derivative $J'(x) \in \mathbb{R}^{m \times n}$.
The convex hull of the Bouligand-subdifferential $\partial J(x)= co (\partial_{B} J(x))$ is the Clarke's generalized Jacobian of $J$ at $x$.
\end{defi}
\begin{defi}
Let  $J:D \rightarrow \mathbb{R}^{m}$ be defined on the open set $D \subset \mathbb{R}^{n}$. 
$J$ is called B-differentiable at $x \in D$ if $J$ is directionally differentiable at $x$ and
\[
\|J(x+h)-J(x)-J'(x)(h)\|= o(\|h\|) \hspace{5mm } \text{as} \hspace{5mm} h \rightarrow 0.
\] 
\end{defi}

\begin{prop}\label{def_bou_llc}
Let $D \subset \mathbb{R}^{n}$ be open and $J:D \rightarrow \mathbb{R}^{m}$ be a locally Lipschitz continuous function which is directionally differentiable at $x_0 \in D$. Then, the function $J$ is B-differentiable at $x_0$.
\end{prop}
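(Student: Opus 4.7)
The plan is to show the remainder estimate by a compactness-plus-continuity argument, exploiting that the unit sphere in $\mathbb{R}^n$ is compact and that local Lipschitz continuity promotes pointwise directional differentiability to a uniform statement.

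First, I would establish an auxiliary fact: the mapping $h \mapsto J'(x_0)(h)$ is Lipschitz continuous on $\mathbb{R}^n$ with the same constant $L$ as the local Lipschitz constant of $J$ near $x_0$. This is where the proof really happens. For any $h_1, h_2 \in \mathbb{R}^n$ and $t>0$ sufficiently small so that $x_0+th_1, x_0+th_2$ lie in the Lipschitz neighborhood, the estimate
\[
\left\|\frac{J(x_0+th_1)-J(x_0)}{t} - \frac{J(x_0+th_2)-J(x_0)}{t}\right\| \le L\,\|h_1-h_2\|
\]
holds, and passing to the limit $t\to 0^+$ (both directional derivatives exist at $x_0$ by assumption) yields $\|J'(x_0)(h_1)-J'(x_0)(h_2)\| \le L\|h_1-h_2\|$. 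Note in particular that $J'(x_0)(\cdot)$ is positively homogeneous of degree one.

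Next, I would argue by contradiction. Suppose B-differentiability fails; then there exist $\varepsilon_0>0$ and a sequence $h_k\to 0$ with
\[
\|J(x_0+h_k)-J(x_0)-J'(x_0)(h_k)\| > \varepsilon_0 \|h_k\|.
\]
Setting $t_k:=\|h_k\|$ and $d_k:=h_k/t_k$, the sequence $(d_k)$ lies in the unit sphere $S^{n-1}$, which is compact; hence, passing to a subsequence, $d_k\to d$ for some $d\in S^{n-1}$.

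Finally, I would decompose the excess error via the intermediate point $x_0+t_k d$:
\[
J(x_0+h_k)-J(x_0)-J'(x_0)(h_k) = \underbrace{\bigl[J(x_0+h_k)-J(x_0+t_k d)\bigr]}_{(\mathrm{I})} + \underbrace{\bigl[J(x_0+t_k d)-J(x_0)-t_k J'(x_0)(d)\bigr]}_{(\mathrm{II})} + \underbrace{\bigl[t_k J'(x_0)(d)-J'(x_0)(h_k)\bigr]}_{(\mathrm{III})}.
\]
Dividing by $t_k$, the term (I) is bounded by $L\|d_k-d\|\to 0$ by the local Lipschitz property of $J$, the term (II) is $o(1)$ by directional differentiability of $J$ at $x_0$ in the fixed direction $d$, and the term (III), rewritten as $t_k(J'(x_0)(d)-J'(x_0)(d_k))$ using positive homogeneity, is bounded by $L\|d-d_k\|\to 0$ thanks to the auxiliary Lipschitz fact. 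This contradicts the lower bound $\varepsilon_0$, completing the proof.

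The main obstacle is the auxiliary Lipschitz continuity of the directional derivative in its direction argument; once that is in hand, the compactness and telescoping arguments are routine. I expect no further subtlety, since all of $J$, the local Lipschitz estimate, and the directional derivative at $x_0$ interact cleanly on the finite-dimensional domain.
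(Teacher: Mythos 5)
Your argument is correct. Note, however, that the paper does not actually prove this proposition at all: its ``proof'' is a bare citation to Scholtes (Th.~3.1.2), so you have supplied the content the paper delegates to the literature. What you wrote is essentially the standard textbook argument for this fact, and both of its ingredients are sound: the auxiliary step (Lipschitz continuity of $h\mapsto J'(x_0)(h)$ with the same constant $L$, obtained by estimating difference quotients and passing to the limit) is valid because both directional derivatives exist by hypothesis and $x_0+th_i\in D$ for small $t>0$ since $D$ is open; and the compactness/telescoping step is clean, with (I) and (III) controlled by $L\|d_k-d\|$ (using positive homogeneity for (III)) and (II) by directional differentiability in the single fixed direction $d$. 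The contradiction is reached along the extracted subsequence, which suffices because the lower bound $\varepsilon_0\|h_k\|$ was assumed for every $k$. The only reading convention worth stating explicitly is that ``directionally differentiable at $x_0$'' must mean differentiable in \emph{every} direction (this is how the paper uses it), since the auxiliary Lipschitz estimate needs $J'(x_0)(h)$ for all $h$. The compactness of $S^{n-1}$ is also precisely where finite-dimensionality enters, consistent with the proposition being stated only for $D\subset\mathbb{R}^n$.
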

\begin{proof}
\cite[Th.3.1.2 ]{Scholtes}
\end{proof}

Let us notice that the B-derivative of a locally Lipschitz continuous function is its directional derivative.

\begin{prop}\label{b-prop}
Let $J: D \rightarrow \mathbb{R}^{m}$ be defined on the open set $D \subset \mathbb{R}^{n}$ and let $J$ be B-differentiable at x. Then, $J'(x)(\cdot)$ is Lipschitz continuous, and, for every $h \in \mathbb{R}^{n}$, there exists $M \in \partial J(x)$ such that
\[
J'(x)(h)=M h.
\]
\end{prop}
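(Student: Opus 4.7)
The plan is to establish both claims using the local Lipschitz continuity of $J$ near $x$, which is implicit in the use of the Clarke Jacobian $\partial J(x)$. For the Lipschitz continuity of $h \mapsto J'(x)(h)$, let $L$ be a Lipschitz constant of $J$ on a ball $B_r(x) \subset D$. For any $h_1, h_2 \in \mathbb{R}^n$ and $t > 0$ small enough that $x + t h_i \in B_r(x)$, the estimate
\[
\left\|\frac{J(x+th_1) - J(x)}{t} - \frac{J(x+th_2) - J(x)}{t}\right\| \leq L \|h_1 - h_2\|
\]
holds. Passing to the limit $t \to 0^+$, which is justified by the directional differentiability of $J$ at $x$, yields $\|J'(x)(h_1) - J'(x)(h_2)\| \leq L \|h_1 - h_2\|$.

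For the existence of $M \in \partial J(x)$ with $J'(x)(h) = Mh$, I would proceed by contradiction. The set $S(h) := \{Mh : M \in \partial J(x)\}$ is compact and convex, being the image of the compact convex set $\partial J(x)$ under the continuous linear map $M \mapsto Mh$. If $J'(x)(h) \notin S(h)$, Hahn--Banach separation in $\mathbb{R}^m$ produces $\lambda \in \mathbb{R}^m$ with
\[
\langle \lambda, J'(x)(h) \rangle > \max_{M \in \partial J(x)} \langle \lambda, Mh \rangle.
\]
Consider the scalarization $\phi(y) := \langle \lambda, J(y) \rangle$, which is locally Lipschitz and directionally differentiable with $\phi'(x)(h) = \langle \lambda, J'(x)(h) \rangle$. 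Applying Lebourg's scalar mean value theorem to $\phi$ on $[x, x+th]$, for each small $t > 0$ one obtains $\theta_t \in (0,1)$ and $\xi_t \in \partial \phi(x + \theta_t t h)$ with $\phi(x+th) - \phi(x) = t \langle \xi_t, h\rangle$. Dividing by $t$, extracting a convergent subsequence $\xi_{t_k} \to \xi^*$ (possible by boundedness from local Lipschitz continuity), and using the upper semicontinuity of $\partial \phi$, we obtain $\phi'(x)(h) = \langle \xi^*, h\rangle$ with $\xi^* \in \partial \phi(x)$. The Clarke chain rule $\partial \phi(x) = \{M^T \lambda : M \in \partial J(x)\}$ then yields $M^* \in \partial J(x)$ with $\xi^* = (M^*)^T \lambda$, and hence $\phi'(x)(h) = \langle \lambda, M^* h\rangle$, which contradicts the separation inequality.

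The main obstacle is the vector-valued setting: a naive componentwise application of Lebourg's theorem produces subgradients $\xi_i \in \partial J_i(x)$, but since in general $\partial J(x) \subsetneq \partial J_1(x) \times \cdots \times \partial J_m(x)$, one cannot simply assemble a matrix in $\partial J(x)$ from these rows. The scalarization by $\lambda$ combined with the chain rule $\partial(\lambda^T J) = \lambda^T \partial J$ overcomes this difficulty by reducing the claim to the scalar case, which is essentially how the result is treated in \cite{Scholtes}.
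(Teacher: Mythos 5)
Your proof is correct; note that the paper itself offers no argument for this proposition, deferring entirely to the citation \cite[Sec. 8.2.1]{Kunisch-ito}, so there is no in-paper proof to compare against. Your two-step argument is the standard one: the Lipschitz estimate on difference quotients passes to the limit cleanly (using the local Lipschitz continuity that, as you rightly observe, is implicitly assumed since $\partial J(x)$ is only defined for locally Lipschitz maps), and the separation-plus-scalarization argument via Lebourg's mean value theorem and the upper semicontinuity of the Clarke subdifferential correctly handles the vector-valued difficulty you identify. One small point: the Clarke scalarization rule gives only the inclusion $\partial(\lambda^{\top}J)(x) \subseteq \{M^{\top}\lambda : M \in \partial J(x)\}$ in general, not equality as you wrote; fortunately the inclusion is exactly the direction your contradiction needs, so the argument stands.
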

\begin{proof}
 \cite[Sec. 8.2.1]{Kunisch-ito} 
\end{proof}

\begin{prop}\label{ssm_prop}
Let $J: D \rightarrow \mathbb{R}^{m}$ be defined on the open set $D \subset \mathbb{R}^{n}$. Then, for $x \in D$ the following statements are equivalent:
\begin{enumerate}
\item $J$ is semismooth at $x$.
\item $J$ is Lipschitz continuous near $x$, $J'(x)( \cdot )$ exists, and
\[
\displaystyle \sup_{M \in \partial J(x+h)} \|M h - J'(x)(h)\| = o(\|h\|) \hspace{5mm} \text{as} \hspace{5mm} h\rightarrow 0.
\]
\item $J$ is Lipschitz continuous near $x$, $J'(x)(\cdot)$ exists, and
\[
\displaystyle \sup_{M \in \partial J(x+h)} \|J(x+h) - J(x) - M h\| = o(\|h\|) \hspace{5mm} \text{as} \hspace{5mm} h\rightarrow 0.
\]
\end{enumerate}
\end{prop}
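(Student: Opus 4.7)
The plan is to organize the proof around the observation that, once one has local Lipschitz continuity together with the existence of the directional derivative $J'(x)(\cdot)$, Proposition \ref{def_bou_llc} automatically supplies B-differentiability at $x$, i.e.,
\[
\|J(x+h) - J(x) - J'(x)(h)\| = o(\|h\|) \quad \text{as } h \to 0.
\]
This estimate is the bridge that lets one pass back and forth between the ``linearization at $x$'' condition (2) and the ``linearization at $x+h$'' condition (3) by a single triangle-inequality argument. The equivalence $(1) \Leftrightarrow (2)$ will be essentially the definitional one (in the Ulbrich/Qi--Sun sense), so the real substance of the proof is $(2) \Leftrightarrow (3)$.

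For the direction $(2) \Rightarrow (3)$, I would fix $h$ small and, for each $M \in \partial J(x+h)$, apply the triangle inequality
\[
\|J(x+h) - J(x) - Mh\| \;\leq\; \|J(x+h) - J(x) - J'(x)(h)\| \;+\; \|J'(x)(h) - Mh\|.
\]
The first summand on the right is $o(\|h\|)$ uniformly in $M$ by B-differentiability, and the sup over $M$ of the second summand is $o(\|h\|)$ by hypothesis (2). Taking the supremum over $M \in \partial J(x+h)$ on the left then yields the estimate claimed in (3). The direction $(3) \Rightarrow (2)$ is symmetric: swap the roles of the two terms in the triangle inequality,
\[
\|Mh - J'(x)(h)\| \;\leq\; \|J(x+h) - J(x) - Mh\| \;+\; \|J(x+h) - J(x) - J'(x)(h)\|,
\]
and invoke (3) together with B-differentiability.

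For the equivalence $(1) \Leftrightarrow (2)$ I would appeal to the definition of semismoothness adopted in the cited reference \cite{Ulbrich}: a locally Lipschitz, directionally differentiable $J$ is semismooth at $x$ precisely when $\sup_{M \in \partial J(x+h)} \|Mh - J'(x)(h)\| = o(\|h\|)$. If instead the Qi--Sun definition via the existence of $\lim Mh'$ for $M \in \partial J(x + th')$, $h' \to h$, $t \downarrow 0$ were taken as primitive, one would have to run a short argument identifying this limit with $J'(x)(h)$ — but this is a standard fact that does not require more than local Lipschitzness plus the upper semicontinuity of the Clarke Jacobian.

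The main obstacle, should one wish to be self-contained rather than cite \cite{Ulbrich}, would be precisely this identification of the primitive definition of semismoothness with the estimate in (2); once that is granted, the rest is a one-line triangle inequality leveraging B-differentiability. No delicate measure-theoretic or compactness arguments are needed, since the sup over $\partial J(x+h)$ is attained on a compact set and the whole argument is pointwise in $h$.
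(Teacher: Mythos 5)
Your proposal is correct, but it is worth noting that the paper does not prove this proposition at all: it simply cites \cite[Prop.~2.7]{Ulbrich}, so there is no in-paper argument to compare against. What you have reconstructed is essentially the standard proof behind that citation. The two nontrivial ingredients are exactly the ones you isolate: (i) under the common hypotheses of (2) and (3) --- local Lipschitz continuity plus existence of $J'(x)(\cdot)$ --- Proposition \ref{def_bou_llc} yields B-differentiability, so $\|J(x+h)-J(x)-J'(x)(h)\| = o(\|h\|)$, and this single estimate converts (2) into (3) and back by one triangle inequality (the first summand is independent of $M$, so uniformity over $\partial J(x+h)$ is automatic, and compactness of the Clarke Jacobian makes the suprema finite); (ii) the equivalence with (1) is definitional or nearly so, depending on whether one takes the Qi--Sun limit formulation or the estimate in (2)/(3) as the primitive definition of semismoothness. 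You correctly flag (ii) as the only place where a genuinely self-contained treatment would require additional work (upper semicontinuity of $\partial J$ and identification of the limit with $J'(x)(h)$), and you correctly defer it to the cited reference, which is the same resolution the authors adopt wholesale. In short: your route is more informative than the paper's, and I see no gap in it beyond the explicitly acknowledged reliance on the definitional convention of \cite{Ulbrich}.
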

\begin{proof}
\cite[Prop.2.7]{Ulbrich}
\end{proof}

\begin{remark}\label{ssm_is_boul}
Let us notice that, from Proposition \ref{def_bou_llc} we have that $J$ is B-differentiable if it is Lipschitz continuous near $x$ (locally Lispchitz continuous at $x$) and directionally differentiable at $x$. Then, from Proposition \ref{ssm_prop}, we have that if $J$ is semismooth, $J$ is B-differentiable.
\end{remark}

\begin{prop}\label{b-prop2}
Let $J: D \rightarrow \mathbb{R}^{m}$ be defined on the open set $D \subset \mathbb{R}^{n}$ and let $J$ be semismooth in $x$. Then
\[
\| J'(x+h)(h) - J'(x)(h) \|= o(\|h\|) \hspace{5mm}\text{ as }  \hspace{5mm} h \rightarrow 0.
\]
\end{prop}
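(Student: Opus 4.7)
The plan is to exploit two facts that Propositions \ref{ssm_prop} and \ref{b-prop} already give us: the semismoothness condition at $x$ uniformly controls $Mh$ over the generalized Jacobian at $x+h$, while B-differentiability lets us realize $J'(x+h)(h)$ as $Mh$ for some particular $M\in\partial J(x+h)$. Combining these two pieces of information yields the result directly.

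More concretely, first I would invoke Proposition \ref{ssm_prop}(2) at the point $x$: semismoothness of $J$ at $x$ gives
\[
\sup_{M\in\partial J(x+h)}\bigl\|Mh-J'(x)(h)\bigr\|=o(\|h\|)\quad\text{as }h\to 0.
\]
Next, I would apply Remark \ref{ssm_is_boul} to conclude that $J$, being semismooth near $x$ and locally Lipschitz there, is B-differentiable at $x+h$ for all sufficiently small $h$. Proposition \ref{b-prop} then guarantees the existence of some $M_h\in\partial J(x+h)$ such that
\[
J'(x+h)(h)=M_h\,h.
\]

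Chaining these two observations closes the argument: for every $h$ small enough,
\[
\bigl\|J'(x+h)(h)-J'(x)(h)\bigr\|=\bigl\|M_h\,h-J'(x)(h)\bigr\|\le\sup_{M\in\partial J(x+h)}\bigl\|Mh-J'(x)(h)\bigr\|=o(\|h\|),
\]
which is precisely the stated estimate.

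The only subtlety I expect is ensuring that $J'(x+h)(\cdot)$ actually exists, so that the left-hand side makes sense. This is not automatic from semismoothness at the single point $x$, but it follows once we observe that the hypothesis of the proposition implicitly grants us semismoothness in a neighbourhood of $x$ (or, at the very least, B-differentiability at the perturbed points $x+h$); under either reading, Remark \ref{ssm_is_boul} combined with Proposition \ref{b-prop} supplies the needed directional derivative. All remaining steps are clean consequences of the two propositions above, so no further technical machinery should be required.
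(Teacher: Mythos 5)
Your argument is correct. The paper itself gives no proof of this proposition, only the citation \cite[Th. 8.2]{Kunisch-ito}, and the chain you propose --- realize $J'(x+h)(h)$ as $M_h h$ for some $M_h \in \partial J(x+h)$ via Proposition \ref{b-prop}, then bound $\|M_h h - J'(x)(h)\|$ by the supremum in Proposition \ref{ssm_prop}(2) --- is exactly the standard argument behind that cited result. You also correctly flag the one genuine subtlety: semismoothness at the single point $x$ does not by itself guarantee that $J'(x+h)(\cdot)$ exists, so one must read the hypothesis as granting directional differentiability (hence, with local Lipschitz continuity, B-differentiability) at the perturbed points $x+h$ for small $h$; this is implicit in the proposition's statement, since otherwise the left-hand side would be undefined. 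With that reading, every step is justified by results already stated in Section \ref{sec:prelim}, and nothing further is needed.
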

\begin{proof}
\cite[Th. 8.2]{Kunisch-ito} 
\end{proof}

Next, we introduce the notion of slant differentiability \cite[Sec.1]{Hintermuller}  that will be used in this work.

\begin{defi}\label{slanting-diff}
Let $X$ and $Y$ be Banach spaces, and $D$ be an open subset of $X$. A function $J: D \subset X \rightarrow Y$ is said to be slantly differentiable in $ D$, if there exists a family of mappings $J^{\circ}: D \rightarrow \mathcal{L}(X,Y)$ such that 
\begin{equation*}
\lim_{h \rightarrow 0} \frac{ \|J(x+h) - J(x) - J^{\circ}(x+h)h \|}{\|h\|}=0.
\end{equation*}
for every $x \in D$. The function $J^{\circ}$ is called a slanting function for $J$ in $D$.
\end{defi}

The previous definition was introduced in \cite{Hintermuller} as an adaptation of the definition of slant differentiability in Banch spaces stated in \cite{Chen-etal}, where the family of linear operators $\{J^{\circ}(x+h)\}$ is required to be uniformly bounded in the operator norm. Also, in \cite[pp. 868]{Hintermuller} the authors state that the notion of slant differentiability is a more general concept than the definition of semismoothness. In fact, the slanting functions are not required to be elements of the Clarke's generalized Jacobian $\partial J(x+h)$. However, a single-valued selection $M(x+h) \in \partial J(x+h)$, with $x \in D$, is a slanting function $J^{\circ}$- in the sense of Definition \ref{slanting-diff} - if Proposition \ref{ssm_prop} (item 3) holds for $x \in D$ (see \cite[Sec.1]{Hintermuller} for further details).

\begin{prop}\label{remark1}
If $J$ is continuously differentiable in a neighbourhood of $x$, then
\[
\partial J(x)=\partial_{B} J(x)= \{J'(x)\}
\]
\end{prop}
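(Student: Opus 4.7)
The plan is to unfold the definition of $\partial_B J(x)$ and exploit the fact that $C^1$-smoothness in a neighborhood of $x$ makes the Fréchet derivative $J'$ both defined everywhere near $x$ and continuous there. First I would fix an open neighbourhood $U \subset D$ of $x$ on which $J$ is continuously differentiable; by hypothesis $U \subset D_J$, so in particular $x \in D_J$ and $J'$ is continuous on $U$.

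Next I would show the two inclusions characterizing $\partial_B J(x)$. For $\{J'(x)\} \subset \partial_B J(x)$, I would take the constant sequence $x_k = x$, which lies in $D_J$ and satisfies $x_k \to x$ and $J'(x_k) = J'(x) \to J'(x)$, so $J'(x) \in \partial_B J(x)$. For the reverse inclusion, I would pick any $M \in \partial_B J(x)$ with witnessing sequence $(x_k) \subset D_J$ such that $x_k \to x$ and $J'(x_k) \to M$; for $k$ large, $x_k \in U$, and by continuity of $J'$ on $U$ we get $J'(x_k) \to J'(x)$. Uniqueness of limits then forces $M = J'(x)$, yielding $\partial_B J(x) = \{J'(x)\}$.

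Finally, the equality $\partial J(x) = \partial_B J(x)$ follows from the definition $\partial J(x) = \mathrm{co}(\partial_B J(x))$, since the convex hull of a singleton is itself. I do not foresee any real obstacle here: the proof is essentially a direct verification from the definitions of $\partial_B J$ and $\partial J$, and the key analytic input, namely continuity of $J'$ on a neighbourhood of $x$, is guaranteed by the hypothesis.
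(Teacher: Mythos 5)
Your proof is correct and complete. Note, though, that the paper does not actually prove this proposition itself: it simply cites \cite[Prop.\ 2.2]{Ulbrich}, so there is no in-paper argument to compare against. What you have written is the standard direct verification underlying that cited result: the hypothesis gives a neighbourhood $U\subset D_J$ of $x$ on which $J'$ is continuous; the constant sequence (or indeed any sequence in $U$ converging to $x$) shows $J'(x)\in\partial_B J(x)$, continuity of $J'$ on $U$ forces every limit point of $J'(x_k)$ along an admissible sequence to equal $J'(x)$, and the convex hull of a singleton is the singleton. The only cosmetic remark is that some authors phrase the B-subdifferential with sequences $x_k\ne x$; your argument survives that variant unchanged, since you can replace the constant sequence by any nonconstant sequence in $U$ converging to $x$ and still invoke continuity. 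Supplying this self-contained argument is a reasonable alternative to the paper's bare citation.
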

\begin{proof}
\cite[Prop.2.2]{Ulbrich}
\end{proof}

We now present an important example of a semismooth function that will be useful in the subsequent sections.
\begin{example}\label{ex:max} Let $g>0$ be a constant.  The mapping 
\begin{equation*}
\vec{z} \rightarrow \max(g,\gamma |\vec{z}|)
\end{equation*}
from $\mathbb{R}^n$ to $\mathbb{R}$ is semismooth on $\mathbb{R}^n$. Further, the slant derivative of this function is the characteristic function $\chi_{A_\gamma}(\vec{z})$ defined by

\begin{equation*}
\chi_{A_{\gamma}}(\vec{z}) = \begin{cases} \begin{array}{ll}
        1, & \text{ if  } \vec{z} \in A_{\gamma},\\
       0, & \text{ if  }  \vec{z} \in X \setminus A_\gamma,
        \end{array}
        \end{cases}
\end{equation*}
where $A_{\gamma}:=\left\lbrace \vec{z} : \gamma |\vec{z}| \geq g \right\rbrace $
\end{example}
\begin{proof}
\cite[Sec.3, Lemma 3.1]{Hintermuller}
\end{proof}

Finally, we introduce the mean value theorem for B-differentiable functions. 

\begin{thm}{(Mean value theorem for B-differentiable funcions.)}\label{thm:mvt} Let $D \subset \mathbb{R}^{n}$ be an open convex set, $J: D \rightarrow \mathbb{R}^{m}$ be a B-differentiable function, and $x_0,x_1 \in D$. The function $\varphi:[0,1] \rightarrow \mathbb{R}^{m}$ defined by $\varphi(t)=J'(x_0 +t(x_1 - x_0))(x_1-x_0)$ is Lebesgue integrable and
\[
J(x_1)=J(x_0)+ \displaystyle \int_0^1 J'(x_0 + t(x_1 - x_0))(x_1 - x_0) dt
\]
\end{thm}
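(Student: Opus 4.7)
The plan is to reduce the statement to the classical fundamental theorem of calculus along the segment joining $x_0$ and $x_1$. Let $h := x_1 - x_0$, and define the one-variable auxiliary function
\[
\psi : [0,1] \to \mathbb{R}^m, \qquad \psi(t) := J\bigl(x_0 + t h\bigr).
\]
Since $D$ is convex, the segment $\{x_0 + th : t \in [0,1]\}$ lies entirely in $D$, so $\psi$ is well defined. The first step is to show that $\psi$ is Lipschitz on $[0,1]$. Because $J$ is B-differentiable on the open set $D$ (and therefore locally Lipschitz in the sense used throughout the excerpt; see Propositions \ref{def_bou_llc}--\ref{b-prop}), $J$ is Lipschitz on some open neighbourhood of the compact segment, with constant $L$ say. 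Then $\|\psi(t)-\psi(s)\| \le L\|h\|\,|t-s|$, so $\psi$ is absolutely continuous. Lebesgue's theorem gives that $\psi$ is differentiable almost everywhere in $[0,1]$ and, as an absolutely continuous function,
\[
\psi(1) - \psi(0) = \int_0^1 \psi'(t)\,dt.
\]

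The main step is to identify $\psi'(t)$ with $\varphi(t) := J'(x_0+th)(h)$ at every point $t \in (0,1)$ where $\psi$ is classically differentiable. Fix such a point, write $x := x_0 + th$, and consider $s>0$ small enough that $t+s, t-s \in [0,1]$. By the definition of the directional derivative,
\[
\lim_{s\to 0^{+}} \frac{\psi(t+s)-\psi(t)}{s} \;=\; \lim_{s\to 0^{+}} \frac{J(x+sh)-J(x)}{s} \;=\; J'(x)(h) \;=\; \varphi(t).
\]
Since $\psi$ is differentiable at $t$, the two-sided limit exists and coincides with this one-sided limit, so $\psi'(t) = \varphi(t)$. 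This holds for a.e.\ $t \in [0,1]$.

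It remains to argue Lebesgue integrability of $\varphi$. Since $\varphi = \psi'$ almost everywhere and $\psi'$ is measurable (as the a.e.\ pointwise limit of the measurable difference quotients of the continuous function $\psi$), $\varphi$ is measurable. The bound $\|\varphi(t)\| = \|J'(x_0+th)(h)\| \le L\|h\|$, valid a.e.\ by Proposition \ref{b-prop} together with the Lipschitz estimate above, shows $\varphi \in L^1([0,1];\mathbb{R}^m)$. Combining these observations with the displayed integral identity yields
\[
J(x_1) = \psi(1) = \psi(0) + \int_0^1 \psi'(t)\,dt = J(x_0) + \int_0^1 J'\bigl(x_0+t(x_1-x_0)\bigr)(x_1-x_0)\,dt,
\]
which is the desired equality.

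The most delicate point is the identification $\psi'(t) = \varphi(t)$ a.e.: the directional derivative supplies only a right-hand limit of the difference quotient for $\psi$, whereas classical differentiability of $\psi$ requires two-sided agreement. Fortunately, Lebesgue's theorem already gives two-sided differentiability a.e.\ for the Lipschitz function $\psi$, so the one-sided limit automatically equals the two-sided derivative at every such point, and no further regularity of $J$ (such as semismoothness) is needed for this argument.
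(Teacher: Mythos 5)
Your argument is correct, but note that it is genuinely more than what the paper provides: the paper's entire proof of Theorem \ref{thm:mvt} is the citation ``See \cite[Prop.\ 3.1.1]{Scholtes}'', so you have supplied the self-contained argument that the paper outsources. Your route --- restrict $J$ to the segment, observe that the restriction $\psi$ is Lipschitz and hence absolutely continuous, invoke the Lebesgue fundamental theorem of calculus componentwise, and identify the a.e.\ derivative $\psi'(t)$ with the directional derivative $J'(x_0+th)(x_1-x_0)$ by matching the right-hand difference quotient against the two-sided derivative that exists a.e.\ --- is the standard proof and is essentially the one in the cited reference; your closing remark correctly isolates the only delicate point (one-sided versus two-sided limits). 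The single caveat is your opening step ``$J$ is B-differentiable, therefore locally Lipschitz'': the paper's definition of B-differentiability asks only for directional differentiability plus the $o(\|h\|)$ remainder, so local Lipschitz continuity is not a formal consequence of the stated hypothesis. It is, however, built into the convention of \cite{Scholtes} and is already presupposed by Propositions \ref{def_bou_llc} and \ref{b-prop} (which speak of $\partial J(x)$, defined only for locally Lipschitz maps), and it holds in every application the paper makes of the theorem, since there $J$ is semismooth and hence locally Lipschitz. To make your proof match the theorem exactly as stated, either add local Lipschitz continuity as an explicit standing assumption or remark that without it the absolute continuity of $\psi$ --- and with it the whole argument --- is unavailable.
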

\begin{proof}
See \cite[Prop. 3.1.1]{Scholtes}.
\end{proof}

\section{Regularization and Discretization}\label{sec:reganddisc}

The minimization problem  \eqref{eq:prob} involves a convex non-smooth functional. In fact, the norm $|\nabla y|$ in the second term implies that the functional in \eqref{eq:prob} is not differentiable. In this context, we propose a regularization approach, based on a local Huber regularization procedure. Huber regularization has been used in previous contributions to approximate numerically several free boundary and nonsmooth problems with similar structure (see \cite{Gonzalez1} and the references therein). 

 Let $\gamma > 0$. We introduce the function $\psi_{\gamma}: \mathbb{R}^n \rightarrow \mathbb{R}$ as follows:
 \begin{align*}
 \psi_{\gamma}: z \rightarrow \psi_{\gamma}(z)=
 \begin{cases}
  g |z | - \frac{g^2}{2 \gamma}  & if   |z|  > \frac{g}{\gamma}  \\
 \frac{\gamma}{2} |z |^2   & if   |z| \leq \frac{g}{\gamma}.
 \end{cases}
\end{align*}
The function  $\psi_{\gamma}$ corresponds to a local regularization of  the Euclidean norm. Thanks to this procedure we obtain the following regularized optimization problem 
\begin{equation}\label{eq:probreg}
\underset{u\in W_0^{1,p}(\Omega)} {\min} J_{\gamma}(u):=\frac{1}{p}\int_{\Omega}|\nabla u|^{p}\,dx + \int_{\Omega}\psi_{\gamma}(\nabla u)\,dx - \int_{\Omega} f u\,dx.
\end{equation}

\begin{thm}
Let $1<p< \infty$  and $\gamma > 0$. Then, problem \eqref{eq:probreg} has a unique solution $u_\gamma \in W_0^{1,p}(\Omega)$. Also, the sequence $\{u_\gamma\} \subset W_0^{1,p}(\Omega)$ converges strongly in $ W_0^{1,p}(\Omega)$ to the solution of problem \eqref{eq:prob}, as $\gamma \rightarrow \infty$.
\end{thm}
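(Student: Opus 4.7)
The plan is to establish existence and uniqueness of $u_\gamma$ for each fixed $\gamma>0$ via the direct method, then pass to the limit $\gamma\to\infty$ by means of a quantitative pointwise comparison between $\psi_\gamma$ and the Euclidean norm.

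For existence and uniqueness, I would observe that $J_\gamma$ is the sum of (i) the strictly convex, continuous, coercive $p$-Dirichlet term $\frac{1}{p}\|\nabla u\|_{L^p}^p$; (ii) the convex, nonnegative Huber term $\int_\Omega\psi_\gamma(\nabla u)\,dx$, which is continuous on $W_0^{1,p}(\Omega)$ because $\psi_\gamma$ has at most linear growth; and (iii) the continuous linear functional $u\mapsto\int_\Omega f u\,dx$, bounded by H\"older and Poincar\'e. Hence $J_\gamma$ is strictly convex, weakly lower semicontinuous, and coercive, with a coercivity estimate that is \emph{uniform} in $\gamma$ (since $\psi_\gamma\ge 0$). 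A standard minimizing-sequence argument then produces the unique minimizer $u_\gamma$.

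The bridge to the unregularized problem is the pointwise estimate
\[
0 \le g|z| - \psi_\gamma(z) \le \frac{g^2}{2\gamma}\qquad (z\in\mathbb{R}^n),
\]
which is checked branch by branch. Integrating yields $J_\gamma(v)\le J(v)\le J_\gamma(v)+\frac{g^2|\Omega|}{2\gamma}$ for every $v\in W_0^{1,p}(\Omega)$. Letting $u$ be the unique minimizer of $J$, chaining $J_\gamma(u_\gamma)\le J_\gamma(u)\le J(u)$ with $J(u)\le J(u_\gamma)\le J_\gamma(u_\gamma)+\frac{g^2|\Omega|}{2\gamma}$ gives $J(u_\gamma)\to J(u)$ and, via the uniform coercivity, a bound on $\|u_\gamma\|_{W_0^{1,p}}$ independent of $\gamma$. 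Extract a weakly convergent subsequence $u_{\gamma_k}\rightharpoonup \bar u$; weak lower semicontinuity of $J$ together with $\limsup_k J(u_{\gamma_k})\le J(u)$ forces $J(\bar u)=J(u)$, so $\bar u=u$ by uniqueness, and the whole family converges weakly.

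The last step, and to my mind the trickiest, is upgrading this to strong convergence in $W_0^{1,p}(\Omega)$. Since the linear part of $J$ is continuous for weak convergence, the fact that $J(u_{\gamma_k})\to J(u)$ together with the \emph{separate} weak lower semicontinuity of $\frac{1}{p}\|\nabla\cdot\|_{L^p}^p$ and $g\int_\Omega|\nabla\cdot|\,dx$ forces both nonlinear terms to converge to their respective limits individually, for otherwise the sum of the two liminfs would strictly exceed $J(u)$. In particular $\|\nabla u_{\gamma_k}\|_{L^p}\to\|\nabla u\|_{L^p}$, and by uniform convexity of $L^p(\Omega)^n$ for $1<p<\infty$ combined with weak convergence, this yields strong convergence of $\nabla u_{\gamma_k}$ in $L^p(\Omega)^n$, hence $u_{\gamma_k}\to u$ strongly in $W_0^{1,p}(\Omega)$.
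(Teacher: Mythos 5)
The paper does not prove this theorem itself but defers entirely to \cite[Sec. 2]{Gonzalez1}; your argument is a correct, self-contained proof along the standard lines one would expect that reference to follow (direct method for existence and uniqueness, the pointwise bound $0\le g|z|-\psi_\gamma(z)\le g^2/(2\gamma)$ to compare energies and get $J(u_\gamma)\to J(u)$, and the Radon--Riesz property of $L^p$ to upgrade weak to strong convergence). All the individual steps check out, including the slightly delicate point that convergence of the sum plus separate weak lower semicontinuity forces each of the two nonlinear terms to converge individually.
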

\begin{proof}
See \cite[Sec. 2]{Gonzalez1}.
\end{proof}

\subsection{Finite element approximation}\label{sec:FEM}

Let us introduce the finite element approximation of problem \eqref{eq:probreg}. Let $\Omega_h$ be  a given triangulation of the  domain $\Omega$ , $n_e \in \mathbb{N}$ be the number of triangles $T_i$  such that $\bar{\Omega}_h=\displaystyle \cup_{i=1}^{n_e} T_i$ and $N$ be the number of nodes of the triangulation $\Omega_h$. For any two triangles, their closures are either disjoint or have a common vertex or a common edge. Finally, let $\{P_j\}_{j=1,\cdots, N}$ be the vertices (nodes) associated with  $\Omega_h$. Taking this into account, we define 
\begin{equation*}
V_h:= \{v_h \in C(\bar{\Omega}_h): v_h|_{T_i} \in \mathbb{P}_1, \hspace{0.2cm} \forall T_i \in \Omega_h \},  
\end{equation*}
where $\mathbb{P}_1$ is the space of continuous piecewise linear functions defined on $\Omega_h$. Then the following space
\begin{equation}
V^0_h=W_0^{1,p}(\Omega) \cap V_h
\end{equation}
is the  finite-dimensional space associated with the triangulation $\Omega_h$. 

Considering the previous analysis, the finite element approximation of \eqref{eq:probreg} is formulated as follows
\begin{equation}\label{eq:probdis}
\underset{u_h\in V^0_h} {\min} J_{\gamma,h}(u_h):=\frac{1}{p}\int_{\Omega_h}|\nabla u_h|^{p}\,dx + \int_{\Omega_h}\psi_\gamma(\nabla u_h)\,dx - \int_{\Omega_h} f u_h\,dx. 
\end{equation}

\begin{remark}
The finite element approximation of problems like \eqref{eq:probreg} is restricted by the limited higher order regularity for the solution of the $p$-Laplacian (see \cite{Huang}). Due to this fact, this kind of problems are usually approximated by continuous piecewise linear elements, which we also implement in this paper. In \cite{BarretLiu,Bermejo,LiuBarret} the authors discuss optimal error estimates for sufficiently regular solutions, which can be obtained for specific data $f$ and $\Omega$. These results, however, are only valid for the $p$-Laplacian problem, i.e., when $g=0$. Since the solutions for variational inequalities of the second kind usually exhibit low global regularity, a deeper analysis is needed in order to obtain optimal error estimates for VIs. This, we consider, is beyond the scope of this paper.
\end{remark}

The convergence analysis of the multigrid algorithm is based on the differentiability properties of the functional $J_{\gamma,h}(u)$. In order to analyze the regularity of this discrete functional, we decompose it as follows
 \begin{equation}\label{eq:Jdiscrete_decompose}
J_{\gamma,h}(u_h):= \mathcal{F}_h(u_h) + \mathcal{G}_{\gamma,h}(\nabla u_h),
\end{equation}
where 
\begin{equation*}
\mathcal{F}_h(u_h):= \frac{1}{p}\int_{\Omega_h}|\nabla u_h|^{p}\,dx - \int_{\Omega_h} f u_h\,dx\,
\,\,\mbox{and}\,\,\, \mathcal{G}_{\gamma,h}(\nabla u_h):= \int_{\Omega_h}\psi_{\gamma}(\nabla u_h)\,dx.
\end{equation*}
$\mathcal{F}_h(u)$ is a functional associated with the discretized homogeneous Dirichlet  problem for the $p$-Laplace operator, and it is known to be a twice Fr\'echet-differentiable and strictly convex functional with Fr\'echet derivative $\mathcal{F}'_h(u_h)$ given by  
\begin{equation}\label{eq:F_prima}
\mathcal{F}'_h(u_h)v_h= \int_{\Omega_h} |\nabla u_h |^{p-2} \nabla u_h \cdot \nabla v_h \,dx - \int_{\Omega_h} f v_h\,dx,\,\,  \forall v_h \in  V^0_h
\end{equation}
and
\begin{equation}\label{eq:F_prima2}
\begin{array}{rll}
\mathcal{F}''_{h}(u_h)(v_h, w_h)=& \displaystyle \int_{\Omega_{h}} |\nabla u_h|^{p-2} \nabla v_h \cdot\nabla w_h \,dx\vspace{0.2cm}\\
& +(p-2) \int_{\Omega_{h}} |\nabla u_2|^{p-4} (\nabla u_h \cdot \nabla v_h) (\nabla u_h \cdot\nabla w_h)  , \,\,\, \forall v_h, w_h \in  V^0_h.
\end{array}
\end{equation}
 See (\cite{BarretLiu,Bermejo,Glowinski,Gonzalez1}) for further details.
\begin{prop}\label{prop}
Let $1<p< \infty$. The functional $J_{\gamma,h}(u_h)$ is 
differentiable with
\begin{equation}\label{eq:slant-derivative_0}
J'_{\gamma,h}(u_h)v_h:=\int_{\Omega_h}|\nabla u_h|^{p-2}\nabla u_h \cdot \nabla v_h\,dx + g \int_{\Omega_h} \frac{\gamma (\nabla u_h \cdot \nabla v_h)}{\max(g,\gamma |\nabla u_h|)}\,dx - \int_{\Omega_h} f v_h\,dx,\,\,  \forall v_h \in  V^0_h.
\end{equation}
Furthermore $J'_{\gamma,h}(u_h)$ is semismooth and its slant derivative is given by
\begin{equation}\label{eq:slant-derivative_1}
\begin{array}{lll}
(J'_{\gamma, h})^{\circ}(u_h)(v_h, w_h) := \displaystyle \int_{\Omega_{h}} |\nabla u_h|^{p-2} \nabla v_h \cdot \nabla w_h \,dx \vspace{0.2cm}\\\hspace{2cm}
+(p-2) \displaystyle \int_{\Omega_{h}} |\nabla u_h|^{p-4} (\nabla u_h \cdot \nabla v_h) (\nabla u_h \cdot\nabla w_h) \,dx\vspace{0.2cm}\\\hspace{3cm}
 + \displaystyle \int_{A_{\gamma}} g \frac{(\nabla v_h \cdot \nabla w_h)}{|\nabla u_h|}  \,dx - \displaystyle \int_{A_{\gamma}} g \frac{ (\nabla u_h \cdot \nabla v_h)(\nabla u_h \cdot \nabla w_h)}{|\nabla u_h|^3} \,dx \vspace{0.2cm}\\\hspace{5.5cm}
+ \displaystyle \int_{\Omega_{k-1} \setminus A_{\gamma}} \gamma (\nabla v_h \cdot \nabla w_h) \,dx, \,\,\,  \forall v_h, w_h \in  V^0_h.
\end{array}
\end{equation}
\end{prop}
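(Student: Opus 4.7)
The plan is to exploit the decomposition $J_{\gamma,h} = \mathcal{F}_h + \mathcal{G}_{\gamma,h}$ from \eqref{eq:Jdiscrete_decompose} and handle each summand separately. Since $\mathcal{F}_h$ is already known to be twice Fr\'echet differentiable with derivatives \eqref{eq:F_prima} and \eqref{eq:F_prima2}, all of the genuine work concerns the Huber term $\mathcal{G}_{\gamma,h}(u_h) = \int_{\Omega_h} \psi_\gamma(\nabla u_h)\,dx$.

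First I would establish that $\psi_\gamma \in C^1(\mathbb{R}^n)$ by computing the gradient on each piece of its definition and matching the values at the transition $|z|=g/\gamma$. A direct calculation yields the uniform expression
\[
\nabla\psi_\gamma(z) = \frac{g\gamma\, z}{\max(g,\gamma|z|)},
\]
which reduces to $\gamma z$ for $|z|\leq g/\gamma$ and to $gz/|z|$ for $|z|>g/\gamma$. Because every $u_h \in V_h^0$ is piecewise linear, $\nabla u_h$ is piecewise constant on $\Omega_h$, so differentiating under the integral and applying the chain rule immediately delivers \eqref{eq:slant-derivative_0}.

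Next, to identify a slanting function for $J'_{\gamma,h}$ and to certify its semismoothness, I would use the fact from Example \ref{ex:max} that $z\mapsto\max(g,\gamma|z|)$ is semismooth with slant derivative $\chi_{A_\gamma}$. A formal differentiation of $g\gamma z/\max(g,\gamma|z|)$ that uses this slanting function produces the Jacobian $\frac{g}{|z|}I - \frac{g\, z\, z^{\top}}{|z|^3}$ on $A_\gamma$ (where $\max=\gamma|z|$) and the Jacobian $\gamma I$ on the complement (where $\max=g$). Contracting this cell-wise against $\nabla v_h$ and $\nabla w_h$ reproduces exactly the last three integrals in \eqref{eq:slant-derivative_1}, while the smooth contribution from $\mathcal{F}''_h$ in \eqref{eq:F_prima2} supplies the first two. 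To close the argument I would combine Remark \ref{ssm_is_boul} with Proposition \ref{ssm_prop}(item 3): the candidate slanting function above is a measurable selection of Clarke's generalized Jacobian, and the residual estimate required by that item reduces cell-wise to the semismoothness of $\max(g,\gamma|\cdot|)$ already certified by Example \ref{ex:max}.

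The main obstacle will be rigorously justifying the chain-rule step on the transition set $\{\,|\nabla u_h| = g/\gamma\,\}$, where $\nabla\psi_\gamma$ is only Lipschitz and the two formal expressions for its Jacobian disagree. To handle this I would invoke the observation following Definition \ref{slanting-diff}: any measurable single-valued selection of Clarke's generalized Jacobian is an admissible slanting function as soon as the residual condition of Proposition \ref{ssm_prop}(item 3) holds, and on the measure-zero transition set either branch of the formal Jacobian gives such a selection. This, together with the piecewise constancy of $\nabla u_h$ on each triangle of $\Omega_h$ and the Lipschitz continuity of $\nabla\psi_\gamma$, will yield the uniform $o(\|h\|)$ estimate needed to apply Proposition \ref{ssm_prop} and thereby conclude the proof.
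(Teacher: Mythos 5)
Your proposal is correct and follows essentially the same route as the paper's proof: decompose $J_{\gamma,h}=\mathcal{F}_h+\mathcal{G}_{\gamma,h}$, use the known second derivative of $\mathcal{F}_h$, write $\mathcal{G}'_{\gamma,h}$ via the $\max$ function, and obtain the slant derivative by the case split on $A_\gamma$ versus its complement using Example \ref{ex:max}. If anything, you are more explicit than the paper in two places --- deriving $\nabla\psi_\gamma(z)=g\gamma z/\max(g,\gamma|z|)$ directly instead of citing \cite{Gonzalez1}, and verifying the $o(\|h\|)$ residual of Proposition \ref{ssm_prop}(item 3) on the transition set $\{|\nabla u_h|=g/\gamma\}$ --- which strengthens rather than changes the argument.
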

\begin{proof}

Let us start by analyzing the functional $\mathcal{G}_{\gamma,h}(\nabla u_h)$.  It is known that $\mathcal{G}_{\gamma,h}$ is differentiable (see \cite[Sec. 2.2]{Gonzalez1}), and moreover, we know that
\begin{equation*}
\mathcal{G}'_{\gamma,h}(\nabla u_h)v_h= g \int_{A_{\gamma,h}} \frac{\nabla u_h \cdot \nabla v_h}{|\nabla u_h|}\,dx + g \int_{\Omega_h \setminus A_{\gamma,h}} \gamma (\nabla u_h \cdot \nabla v_h)\,dx, \,\,\,  \forall v_h \in  V^0_h,
\end{equation*}
where 
\[
A_{\gamma,h} =\{x \in \Omega_h \, : \, \gamma|\nabla u_h(x)| \geq g\}.
\]
By using the $\max$ function, we can rewrite $ \mathcal{G}'_{\gamma,h}(\nabla u_h)(v_h)$ in the following way.
\begin{equation}\label{Ghderiv}
\mathcal{G}'_{\gamma,h}(\nabla u_h)v_h:= g \int_{\Omega_h} \frac{\gamma (\nabla u_h \cdot \nabla v_h)}{\max(g,\gamma |\nabla u_h|)}\,dx, \,\,\,  \forall v_h \in  V^0_h.
\end{equation}
Next, from \eqref{eq:Jdiscrete_decompose}, it follows that
\begin{equation}\label{eq:Jdiscrete_decompose_grad}
J'_{\gamma,h}(u_h)v_h= \mathcal{F}'_h(u_h)v_h + \mathcal{G}'_{\gamma,h}(\nabla u_h)v_h,
\end{equation}
which, thanks to \eqref{eq:F_prima} and \eqref{Ghderiv}, implies that
\begin{equation*}
J'_{\gamma,h}(u_h)v_h:=\int_{\Omega_h}|\nabla u_h|^{p-2}\nabla u_h \cdot \nabla v_h\,dx + g \int_{\Omega_h} \frac{\gamma (\nabla u_h \cdot \nabla v_h)}{\max(g,\gamma |\nabla u_h|)}\,dx - \int_{\Omega_h} f v_h\,dx \,\,\,  \forall v_h \in  V^0_h.
\end{equation*}

The second derivative of $J_{\gamma,h}(u_h)$ does not exist. In fact, the functional $ \mathcal{G}'_{\gamma,h}(\nabla u_h)$ is not differentiable since this functional involves the $\max$ function. Fortunately, the $\max$ function is slantly differentiable when defined in finite dimensional spaces (see Example \ref{ex:max}). Thus, we can calculate the slant derivative of $\mathcal{G}'_{\gamma,h}(\nabla u_h)$, denoted  by $(\mathcal{G}'_{\gamma,h})^{\circ}(\nabla u_h)$, as follows.
\vspace{3mm}

\noindent $|\nabla u| \geq \frac{g}{\gamma}$: Here, we have that
\begin{equation*}
\small
\begin{array}{rll}
(\mathcal{G}'_{\gamma,h})^{\circ}(\nabla u_h)(v_h, w_h) &=& \displaystyle g \int_{A_{\gamma,h}} \frac{\gamma (\nabla v_h \cdot \nabla w_h)}{\max(g,\gamma |\nabla u_h|)}\,dx - g \int_{A_{\gamma,h}} \frac{\chi_{A_{\gamma,h}} (x) \cdot \gamma (\nabla u_h \cdot \nabla w_h)}{(\max(g,\gamma |\nabla u_h|))^2 |\nabla u_h|} \gamma (\nabla u_h \cdot \nabla v_h)\,dx \vspace{0.3cm}\\ 
 &=&\displaystyle g \int_{A_{\gamma,h}} \frac{\gamma (\nabla v_h \cdot \nabla w_h)}{\gamma |\nabla u_h|}\,dx   - g\int_{A_{\gamma,h}} \frac{ \gamma ^2 (\nabla u_h \cdot \nabla w_h) (\nabla u_h \cdot \nabla v_h)}{(\gamma |\nabla u_h|))^2 |\nabla u_h|} \,dx \vspace{0.3cm}\\
&=&\displaystyle g\int_{A_{\gamma,h}} \frac{(\nabla v_h \cdot \nabla w_h)}{ | \nabla u_h|}\,dx   - g \int_{A_{\gamma,h}} \frac{  (\nabla u_h \cdot \nabla w_h) (\nabla u_h \cdot \nabla v_h)}{ |\nabla u_h|^3 } \,dx,
\end{array}
\normalsize
\end{equation*}
where $\chi_{A_{\gamma,h}}$ is the slant derivative of the function $\max(g,\gamma |\nabla u_h|)$.\\\\
\noindent $|\nabla u| < \frac{g}{\gamma}$: Here, we have that
\begin{equation*}
\small
\begin{array}{lll}
(\mathcal{G}'_{\gamma,h})^{\circ}(\nabla u_h)(v_h, w_h) 
= \displaystyle g\int_{I_{\gamma,h}} \frac{\gamma (\nabla v_h \cdot \nabla w_h)}{\max(g,\gamma |\nabla u_h|)}\,dx \vspace{0.3cm} \\\hspace{4cm} -g \displaystyle \int_{I_{\gamma,h}} \frac{\chi_{A_{\gamma,h}} (x) \cdot \gamma (\nabla u_h \cdot \nabla w_h)}{(\max(g,\gamma |\nabla u_h|))^2 |\nabla u_h|} \gamma (\nabla u_h \cdot \nabla v_h)\,dx\vspace{0.3cm} \\\hspace{5cm} 
= \displaystyle g\int_{I_{\gamma,h}} \frac{\gamma (\nabla v_h \cdot \nabla w_h)}{g}\,dx  =\displaystyle \int_{I_{\gamma,h}} \gamma (\nabla v_h \cdot \nabla w_h)\,dx,
\end{array}
\normalsize
\end{equation*}
where $I_{\gamma,h}:=\Omega_h \setminus A_{\gamma,h}$.

Then, the slant derivative of $\mathcal{G}'_{\gamma,h}(\nabla u_h)$ reads as follows
\begin{equation}\label{eq:G_prima_circ}
\begin{array}{rll}
(\mathcal{G}'_{\gamma,h})^{\circ}(u_h)(v_h, w_h)  = \displaystyle g\int_{A_{\gamma,h}}  \frac{(\nabla v_h \cdot \nabla w_h)}{|\nabla u_h|} - g\int_{A_{\gamma,h}} \frac{(\nabla u_h \cdot \nabla v_h)(\nabla u_h \cdot \nabla w_h)}{|\nabla u_h|^3}\vspace{0.3cm}\\+ \displaystyle \int_{I_{\gamma,h}} \gamma (\nabla v_h \cdot \nabla w_h), \,\,\, \forall v_h, w_h \in  V^0_h.
\end{array}
\end{equation}
On the other hand, from \eqref{eq:F_prima2} we have that
\begin{equation}\label{eq:F_prima_prima}
\begin{array}{rll}
\mathcal{F}''_{h}(u_h)(v_h, w_h)=& \displaystyle \int_{\Omega_{h}} |\nabla u_h|^{p-2} \nabla v_h \cdot\nabla w_h \,dx\vspace{0.2cm}\\
& +(p-2) \int_{\Omega_{h}} |\nabla u_2|^{p-4} (\nabla u_h \cdot \nabla v_h) (\nabla u_h \cdot\nabla w_h)  , \,\,\, \forall v_h, w_h \in  V^0_h.
\end{array}
\end{equation}
Hence, from \eqref{eq:Jdiscrete_decompose_grad} and Proposition \ref{remark1}, we can state that
\begin{equation}\label{sec_derivative_decompose}
(J'_{\gamma,h})^{\circ}(u_h)(v_h, w_h)= \mathcal{F}''_{h}(u_h)(v_h , w_h) + (\mathcal{G}'_{\gamma,h})^{\circ}(\nabla u_h)(v_h ,  w_h)  \,\,\, \forall v_h, w_h \in  V^0_h,
\end{equation}
which, thanks to \eqref{eq:G_prima_circ}, \eqref{eq:F_prima_prima} and \eqref{sec_derivative_decompose}, yields that
\begin{equation}
\begin{array}{lll}
(J'_{\gamma,h})^{\circ}(u_h)(v_h, w_h)=\displaystyle \int_{\Omega_{h}} |\nabla u_h|^{p-2} \nabla v_h \nabla w_h \,dx \vspace{0.2cm}\\\hspace{1.5cm}+(p-2) \displaystyle \int_{\Omega_{h}} |\nabla u_h|^{p-4} (\nabla u_h \cdot \nabla v_h) (\nabla u_h \cdot\nabla w_h) \,dx\vspace{0.2cm}\\\hspace{2cm}
+g\displaystyle \int_{A_{\gamma,h}} \frac{(\nabla v_h \cdot \nabla w_h)}{|\nabla u_h|}  \,dx - g\int_{A_{\gamma,h}} \frac{\nabla u_h \cdot \nabla v_h(\nabla u_h \cdot \nabla w_h)}{|\nabla u_h|^3} \,dx \vspace{0.2cm}\\\hspace{3cm}
+ \gamma\displaystyle \int_{I_{\gamma,h}} (\nabla v_h \cdot \nabla w_h) \,dx, \,\,\,  \forall v_h, w_h \in  V^0_h.
\end{array}
\end{equation}
\end{proof}

\begin{prop}\label{prop2.0}
The slanting function $(J'_{\gamma,h})^{\circ}(u_h)$ is positive definite.
\end{prop}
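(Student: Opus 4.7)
The plan is to set $w_h=v_h$ in \eqref{eq:slant-derivative_1} and show that $(J'_{\gamma,h})^{\circ}(u_h)(v_h,v_h)>0$ for every $v_h\in V_h^0\setminus\{0\}$. I would split the quadratic form into two pieces,
\[
(J'_{\gamma,h})^{\circ}(u_h)(v_h,v_h)=T_1(v_h)+T_2(v_h),
\]
where $T_1$ collects the two $p$-Laplacian type integrals over $\Omega_h$ and $T_2$ collects the Huber contribution on $A_{\gamma,h}$ together with the quadratic term on $I_{\gamma,h}$, and then prove nonnegativity of each piece and strict positivity of the sum.

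First I would handle $T_2$. The Cauchy--Schwarz inequality gives
\[
(\nabla u_h\cdot\nabla v_h)^{2}\le |\nabla u_h|^{2}\,|\nabla v_h|^{2},
\]
so the integrand of the Huber part satisfies
\[
\frac{|\nabla v_h|^{2}}{|\nabla u_h|}-\frac{(\nabla u_h\cdot\nabla v_h)^{2}}{|\nabla u_h|^{3}}\ge 0\quad\text{on }A_{\gamma,h},
\]
and the last term $\gamma\int_{I_{\gamma,h}}|\nabla v_h|^{2}\,dx$ is trivially nonnegative, so $T_2(v_h)\ge 0$. For $T_1$, I would treat the cases $p\ge 2$ and $1<p<2$ separately. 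When $p\ge 2$, the coefficient $p-2$ is nonnegative and both summands of $T_1$ are nonnegative pointwise. When $1<p<2$, the coefficient is negative, but applying Cauchy--Schwarz once more yields
\[
|\nabla u_h|^{p-2}|\nabla v_h|^{2}+(p-2)|\nabla u_h|^{p-4}(\nabla u_h\cdot\nabla v_h)^{2}\ge (p-1)|\nabla u_h|^{p-2}|\nabla v_h|^{2}\ge 0,
\]
with the usual convention that the integrand vanishes wherever $\nabla u_h=0$. Hence $T_1(v_h)\ge 0$ in either case.

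For strict positivity, I would fix $v_h\in V_h^0\setminus\{0\}$ and use the fact that, because $v_h$ is continuous piecewise linear and vanishes on $\partial\Omega$, the gradient $\nabla v_h$ cannot vanish almost everywhere on $\Omega_h$. Two subcases then arise. If $\nabla v_h\not\equiv 0$ on a subset of $I_{\gamma,h}$ of positive measure, the last term of $T_2$ is strictly positive. Otherwise $\nabla v_h$ is supported (up to measure zero) on $A_{\gamma,h}$, where $|\nabla u_h|\ge g/\gamma>0$; then the integrand of $T_1$ is bounded below by $\min(1,p-1)|\nabla u_h|^{p-2}|\nabla v_h|^{2}$, which integrates to a strictly positive number. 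Either way $(J'_{\gamma,h})^{\circ}(u_h)(v_h,v_h)>0$, which proves positive definiteness.

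The main obstacle I anticipate is the bookkeeping in the case $1<p<2$, where the second $p$-Laplacian term has a negative coefficient and a singular factor $|\nabla u_h|^{p-4}$; the Cauchy--Schwarz reduction to a coercive $(p-1)|\nabla u_h|^{p-2}|\nabla v_h|^{2}$ bound is what makes the two subcases combine cleanly with the $\gamma$-term on $I_{\gamma,h}$ to yield strict definiteness on the whole space $V_h^0$.
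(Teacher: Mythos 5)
Your proof is correct and rests on the same decomposition the paper uses: the quadratic form splits into the $p$-Laplacian contribution $\mathcal{F}''_{h}$ (your $T_1$) and the Huber contribution $(\mathcal{G}'_{\gamma,h})^{\circ}$ (your $T_2$), and the Cauchy--Schwarz estimate you apply to $T_2$ on $A_{\gamma,h}$ is exactly the paper's. Where you genuinely diverge is in how strict positivity is obtained. The paper invokes the strict convexity of $\mathcal{F}_h$ to assert $\mathcal{F}''_{h}(u_h)(w_h,w_h)>0$ for all $w_h\neq 0$, and separately claims $\int_{\Omega_h\setminus A_{\gamma,h}}\gamma\,|\nabla w_h|^2\,dx>0$ ``since $w_h\neq 0$''; neither assertion is airtight on its own, since the Hessian of a strictly convex functional need only be positive semidefinite (for $p>2$ and $\nabla u_h\equiv 0$ the two $p$-Laplacian terms vanish identically), and $\nabla w_h$ may vanish on the inactive set even though $w_h\neq 0$. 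Your route repairs both points: you prove pointwise nonnegativity of $T_1$ directly, including the $1<p<2$ case via the second Cauchy--Schwarz reduction to $(p-1)|\nabla u_h|^{p-2}|\nabla v_h|^2$, and you then split according to whether $\nabla v_h$ has positive-measure support on $I_{\gamma,h}$ or is concentrated on $A_{\gamma,h}$, where $|\nabla u_h|\geq g/\gamma>0$. The one small point worth spelling out is that for $1<p<2$ the lower bound $|\nabla u_h|^{p-2}\geq c>0$ on $A_{\gamma,h}$ requires an \emph{upper} bound on $|\nabla u_h|$ (since $p-2<0$), which holds because $\nabla u_h$ is piecewise constant on a finite triangulation; this is automatic in $V_h^0$ but deserves a sentence. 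Net effect: your argument is slightly longer but self-contained, and it covers degenerate configurations that the paper's appeal to strict convexity and its final ``since $w_h\neq 0$'' gloss over.
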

\begin{proof}
Following the decompostition presented in \eqref{sec_derivative_decompose}, we know that 
\begin{equation}\label{eq:N-derivative}
(J'_{\gamma,h})^{\circ}(u_h)(w_{h}, w_{h})= \mathcal{F}''_{h}(u_{h})(w_{h}, w_{h})+ (\mathcal{G}'_{\gamma,h})^{\circ}(u_{h})(w_{h}, w_{h}), 
\end{equation}
It is well known that $\mathcal{F}$ is a strictly convex functional, which implies that 
\begin{equation}\label{eq:derivative_p_lapla}
\mathcal{F}''_{h}(u_{h})(w_{h}, w_{h}) > 0, \hspace{0.3cm} \forall w_{h} \in V^0_{h} \setminus{\{0\}}.
\end{equation}
Next, let us recall the expression $(\mathcal{G}'_{\gamma,h})^{\circ}(u_{h})(w_{h}, w_{h})$ given by
\begin{equation}\label{eq:derivative_gamma}
\begin{array}{rll}
(\mathcal{G}'_{\gamma,h})^{\circ}(u_{h})(w_{h}, w_{h}) &=& \displaystyle \int_{A_{\gamma,h}} g \frac{(\nabla w_{h} \cdot \nabla w_{h})}{|\nabla u_{h}|} -  \int_{A_{\gamma,h}} g \frac{(\nabla u_{h} \cdot \nabla w_{h})^2}{|\nabla u_{h}|^3}\vspace{0.2cm}\\
&&+ \displaystyle \int_{\Omega_{k} \setminus A_{\gamma,h}} \gamma (\nabla w_{h} \cdot \nabla w_{h}),  \,\, \forall w_{h} \in V^0_{h}.
\end{array}
\end{equation}
Applying Cauchy-Schwarz to the right hand side in \eqref{eq:derivative_gamma},  we have that
\begin{equation*}
\begin{array}{rll}
\displaystyle \int_{A_{\gamma,h}} g \frac{(\nabla u_{h} \cdot \nabla w_{h})^2}{|\nabla u_{h}|^3} & \leq& \displaystyle  \int_{A_{\gamma,h}} g \frac{|\nabla u_{h}|^2 |\nabla w_{h}| ^2}{|\nabla u_{h}|^3}\vspace{0.2cm}\\
&=&\displaystyle  \int_{A_{\gamma,h}} g  \frac{|\nabla w_{h}|^2}{|\nabla u_{h}|}\vspace{0.2cm}\\
&=&\displaystyle  \int_{A_{\gamma,h}} g  \frac{(\nabla w_{h} \cdot \nabla w_{h})}{|\nabla u_{h}|},
\end{array}
\end{equation*}
which implies that
\begin{equation}\label{eq:derivative_gamma_positive}
(\mathcal{G}'_{\gamma,h})^{\circ}(u_{h})(w_{h}, w_{h}) \geq  \int_{\Omega_{h} \setminus A_{\gamma,h}} \gamma (\nabla w_{h} \cdot \nabla w_{h}) > 0, \text{  since  } w_{h} \neq 0.
\end{equation}
Then, $(J'_{\gamma,h})^{\circ}(u_{h})(w_{h}, w_{h}) > 0$.
\end{proof}

\section{The Multigrid for Optimization (MG/OPT) Method}\label{sec:MGOPT}
In this section, we present the multigrid for optimization (MG/OPT) algorithm for solving the regularized and discretized optimization problem \eqref{eq:probdis}. The MG/OPT method was introduced  as an efficient  tool for large scale optimization problems (see \cite{Nash,Lewis_Nash}). The idea of the algorithm is to take advantage of the solutions of problems discretized in coarse meshes to optimize problems in finer meshes. The efficient resolution of coarse problems provides a way to calculate search directions for problems discretized in finer levels. 
 
In order to present the algorithm, we shall introduce the following preliminaries. Let $\Omega$ be a given bounded domain. A standard procedure to generate a sequence of triangulations on $\Omega$ is to define a coarse mesh and then refine it several times until getting the sequence $\{\Omega_k\}_{k=0,\ldots,m}$,  i.e., each $\Omega_k$ is obtained from $\Omega_{k-1}$ by regular subdivision. This procedure  joins the edge midpoints of any triangle in mesh $\Omega_{k-1}$ by edges, and forms the new triangles of  $\Omega_k$ (see Figure \ref{fig_mallas}). Each node in $\Omega_{k-1}$ is also a node in $\Omega_k$, and every node belonging to $\Omega_{k}$ but not to $\Omega_{k-1}$ is the midpoint of an edge in $\Omega_{k-1}$. Let us denote by $N^{(k)}$ the number of nodes associated to each $\Omega_k$. Thus, the nodes of $\Omega_{k-1}$ are the first $N^{(k-1)}$ nodes of $\Omega_k$ (see Figure \ref{fig_interp}).
 \begin{figure}[H]
\begin{minipage}[htb]{.3\textwidth}
\begin{center}
\includegraphics[scale=0.3]{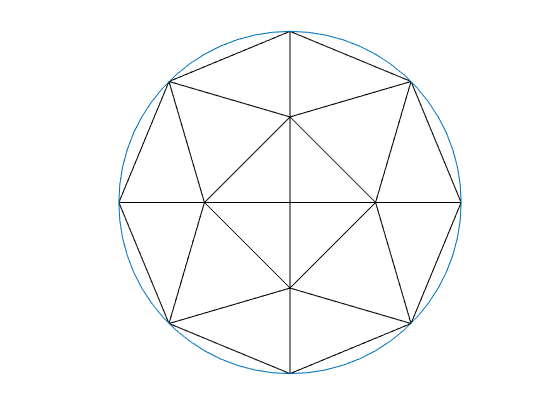}
\label{fig-ex1}
\end{center}
\end{minipage}
\begin{minipage}[htb]{.3\textwidth}
\begin{center}
\includegraphics[scale=0.3]{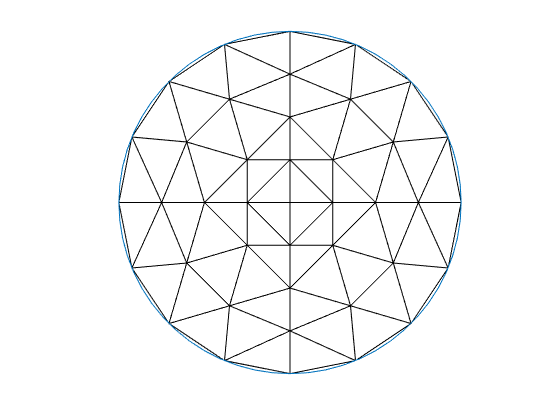}
\end{center}
\end{minipage}
\begin{minipage}[htb]{.3\textwidth}
\begin{center}
\includegraphics[scale=0.3]{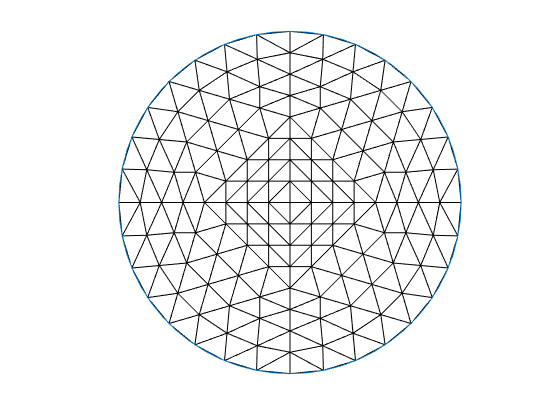}
\centering
\label{fig-ex3}
\end{center}
\end{minipage}
\centering
\caption{Regular subdivision. From left to right: $\Omega_1, \Omega_2$ and $\Omega_3$}\label{fig_mallas}
\end{figure}
When working with domains with curved boundaries, the refinement procedure described above is slightly different in the boundary edges. This is the case, for instance, of the domain depicted in  Figure \ref{fig_interp}. During the refinement, instead of generating a new midpoint in the nearest triangulation edges to the boundary, each edge is replaced by two edges that intersect at the midpoint of the curved-segment of the boundary, i.e., the boundary is bisected by the two new edges added. Consequently, the resulting triangulation only covers the domain $\Omega$ approximately, introducing a source of error \cite[Ch. 4, pp. 93]{Gockenbach}. Since a non polygonal domain can not be triangulated exactly, approximating a domain with a curved boundary is a matter under research. One interesting technique to analyze this problem is the isoparametric method which uses finite elements with curved edges \cite[Ch.4, Sec. 4.7]{Gockenbach}. This approach seems to be the most appropriate for problems of fluid mechanics, which is why it will be considered in future contributions.

The multigrid approach involves two transfer operators. As we are working with a set of meshes and  the algorithm runs at each level of discretization, we need to transfer information among  the different grids. Then, we intoduce the \textit{fine-to-coarse grid transfer operator}, $I_k^{k-1}$, and the \textit{coarse-to-fine grid transfer operator}, $I_{k-1}^k$. 

The latter operator transfers information from a coarse mesh  $\Omega_{k-1}$ to a finer mesh  $\Omega_{k}$. It is also called the prolongation operator. On the other hand, $I_k^{k-1}$ or restriction operator, transfers information from a fine grid to a coarse one. 

In this paper we use the mesh data structure and the operators implemented by M. S. Gockenbach  in order to obtain $I_k^{k-1}$ and $I_{k-1}^{k}$ (see \cite[Ch. 6, Ch. 13]{Gockenbach}). In what follows, the ideas stated in the previous reference are outlined for the prolongation and restriction procedures in a coarse grid $\Omega_{k-1}$ and a finer grid $\Omega_{k}$. For instance, in Figure \ref{fig_interp} we have  the first stage of refinement in a disk domain  with  grids $\Omega_0$ and $\Omega_1$ .
\begin{figure}[H]
\begin{minipage}[l]{0.4\textwidth}
\begin{center}
\includegraphics[trim = 130mm 7mm 40mm 2mm, clip, scale=0.4]{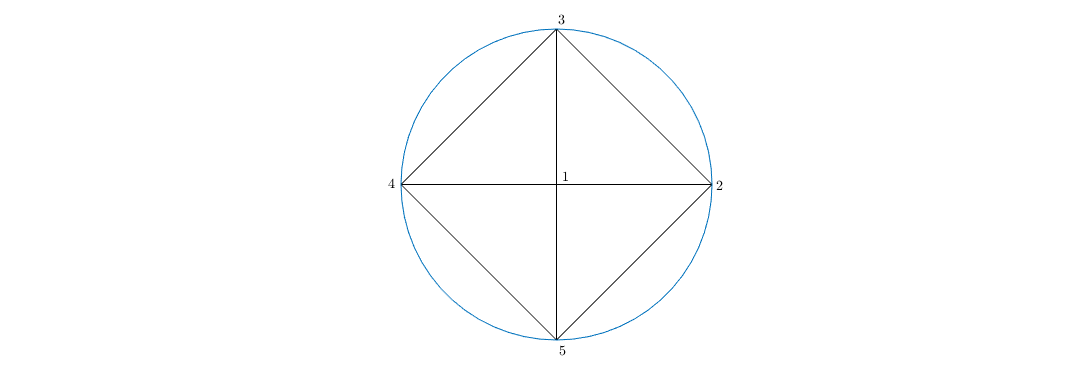}
\end{center}
\end{minipage}
\begin{minipage}[r]{0.4\textwidth}
\begin{center}
\includegraphics[trim = 130mm 5mm 40mm 5mm, clip, scale=0.4]{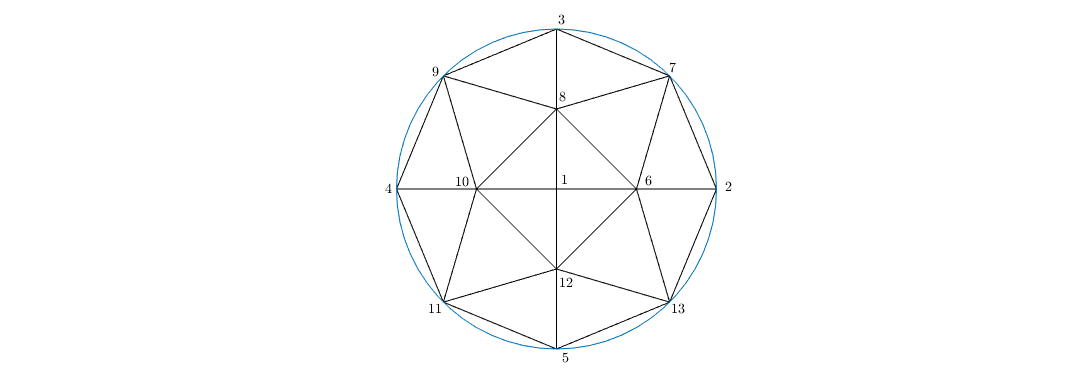}
\end{center}
\end{minipage}
\centering
\caption{ $\Omega_0$ (left), $\Omega_1$ (right).}\label{fig_interp}
\end{figure}
In order to transfer information from the nodes in a coarse mesh $\Omega_{k-1}$ (e.g. $\Omega_0$ in Figure \ref{fig_interp}) to a refined mesh $\Omega_k$ (e.g. $\Omega_1$), we first copy the nodal values of the nodes in $\Omega_{k-1}$ to the corresponding nodes in $\Omega_{k}$ (in our example they correspond to the nodes with indexes $1,2,3,4$ and $5$). By construction, we know that these are the first $N^{(k-1)}$ ($N^{(0)}=5$) nodes of $\Omega_k$, and,the midpoints complete the $N^{(k)}$ ($N^{(1)}=13$) nodes of the mesh $\Omega_k$ (in $\Omega_1$ they correspond to the  nodes with indexes  $6, 7, \cdots, 13$). Then, since the values for nodes with index $j=1, \cdots N^{(k-1)}$ are already given, we need to compute the nodal values for the midpoints, i.e., for nodes with index $j= N^{(k-1)}+1, \cdots, N^{(k)}$.
Let $V(\textbf{P})_k=\{V(P_j)\}_{j=1,\cdots, N^{(k)}}$ be the vector containing the nodal values associated to  $\Omega_k$ 
The prolongation operator computes the values to the midpoints  as follows:
\[
V(P_j)=\frac{1}{2}\left(V(P_{end(j,1)})+V(P_{end(j,2)})\right), \hspace{2mm} j>N^{(k-1)},
\]
where the nodes $P_{end(j,1)}$ and $P_{end(j,2)}$ are the endpoints of the edge in $\Omega_{k-1}$ for which $P_j$ is the midpoint. These endpoints are called node parents and, by construction, are unique for each node.

Now, to transfer information from the nodes in $\Omega_k$ to $\Omega_{k-1}$ (e.g. $\Omega_1$ to $\Omega_{0}$ in Figure \ref{fig_interp}) the restriction operator first copy the values of the nodes in $\Omega_k$ to the corresponding nodes that are also in $\Omega_{k-1}$. Then, for $j>N^{k-1}$, the operator perform the following computation
\[
V(P_{end(j,1)})=V(P_{end(j,1)})+\frac{1}{2}V(P_j) \hspace{3mm} \text{and} \hspace{3mm} V(P_{end(j,2)})=V(P_{end(j,2)})+\frac{1}{2}	V(P_j).
\]

The mesh data structure implemented allows us to copy and extract  nodes from any specific discretization. We refer the reader to see \cite[Ch. 13, sec. 13.2.1]{Gockenbach} for a detailed explanation of the transfer operators.

In multigrid schemes is standard to assume that
\begin{equation*}
I_k^{k-1} = c \left( I_{k-1}^k \right)^{T},
\end{equation*}
where $c$ is a constant. In our case, if we compute the products $I_{k-1}^k V(\textbf{P})_{k-1}=V(\textbf{P})_{k}$ and $I_{k}^{k-1} V(\textbf{P})_{k}=V(\textbf{P})_{k-1}$, the restriction and prolongation operators satisfy the condition (see \cite[Sec. 13.2.1, p. 294]{Gockenbach})
\begin{equation}\label{eq:condition}
I_k^{k-1} = \left( I_{k-1}^k \right)^{T}.
\end{equation}

Now that we have introduced the interpolation operators, we are ready to discuss the MG/OPT algorithm for problem \eqref{eq:probdis}. The MG/OPT method corresponds to a nonlinear programming adaptation of the \textit{full approximation storage} (FAS) scheme (see \cite{Brandt,Trottenberg}). The multigrid subproblems arising from the different discretization levels are nonlinear optimization problems \cite{Lewis_Nash}. Then,  MG/OPT is related to different optimization techniques ranging from the gradient method to quasi Newton methods to solve the problems at each level. The multigrid for optimization approach requires mild conditions regarding the underlying optimization algorithm. Mainly, this algorithm needs to be globally convergent. However, it is important to highlight that at each level of discretization we need to find a sufficiently accurate solution for the minimization subproblem. Then, the selection of the underlying optimization algorithm is not trivial and depends on the inner characteristics of the optimization problem. 

In the present problem, since the $p$-Laplacian is involved in the objective functional $J_{\gamma}$, we have to consider that its finite element approximation \eqref{eq:probdis} results in a nonlinear and possibly degenerate finite dimensional problem (this may be the case when $p<2$, see \cite{Huang} and the references therein). Also, the functional $J_{\gamma,h}$ involves a semismooth function. These facts need to be taken into account when proposing the underlying optimization  algorithms. In our case, we propose a class of preconditioned descent algorithms, designed specifically for $p$-Laplacian type problems. For the convenience of the reader, in the next section we describe briefly this algorithm as is implemented in our computational results.

As we mentioned before, the main idea of the  MG/OPT algorithm is to use coarse  problems to generate, recursively, search directions for finer  problems. Then, a line search procedure, along with the underlying optimization algorithm is used to improve the solution at each level of discretization.

In what follows we present the MG/OPT algorithm. The underlying optimization algorithm will be denoted by $S_{opt}$ inside the multigrid approach. The initial discretized problem is given on the finest grid.  To facilitate the  implementation of the algorithm, the MG/OPT scheme is presented  in a recursive formulation.  Hence, we introduce the following slightly different notation for the optimization problem
\begin{equation*}
 \underset{u_k} {\min} \left( \hat{J}_{\gamma,k}(u_k) - \hat{f}_k^{\top}u_k\right).
\end{equation*}
We set $\hat{f}_k=0$ at the finest  level $k=m$. Then, $\hat{J}_{\gamma,k}$ corresponds to the functional $J_{\gamma,h}$ introduced in problem \eqref{eq:probdis}, discretized at each level $k=0,\ldots,m$. Therefore, we replace the subscript $h$ by $k$. Hereafter, the same treatment is given to $u_k$, which stands for $u_{h}$, discretized at each level $k$. 

\begin{remark}\label{notation}
Since $J_{\gamma,k}$ is Fr\'echet differentiable and defined in finite dimensional spaces, we can associate the derivative $J'_{\gamma,k}(u_k)$ with the gradient $\nabla J_{\gamma,k}(u_h)$ as follows (see \cite[Sec.1.1, p. 8]{Giaquinta})
\[
J'_{\gamma,k}(u_k)v_k=\nabla J_{\gamma,k}(u_k)^{\top}v_k,\,\, \forall v_k \in V_k^0.
\]
In what follows, we will use this representation.
\end{remark}

Summarizing and taking into account Remark \ref{notation}, the algorithm reads as follows.
\begin{algorithm}[H]
\caption{MG/OPT recursive$(\nu_1,\nu_2)$.}\label{alg:MGOPT_alg}
\begin{algorithmic}
\If {$k=0$}, solve $\min_{u_k} \left( \hat{J}_{\gamma,k}(u_k) - \hat{f}_k^{\top}u_k\right)$ and {return}.
\EndIf
\State Otherwise, $k > 0$. \\
Pre-optimization: Apply $\nu_1$ iterations of the optimization algorithm to the problem at level $k$.
\[u_k^{\ell}=S_{opt}(u_k^{\ell-1}),\,\,\,\ell=1,\ldots,\nu_1.\]
\State Coarse-grid correction.\\
\begin{itemize}
\item Restrict: $u_{k-1}^{\nu_1}= I_k^{k-1} u_k^{\nu_1}$.
\item Compute the fine-to-coarse gradient correction:
\[
\tau_{k-1}:= \nabla \hat{J}_{\gamma,k-1}(u_{k-1}^{\nu_1}) - I_k^{k-1} \nabla \hat{J}_{\gamma,k}(u_k^{\nu_1}).
\]
\item Define $\hat{f}_{k-1}:=I_k^{k-1} \hat{f}_k + \tau_{k-1}$

and apply one cycle of MGOPT$(\nu_1,\nu_2)$ to
\[
\min_{u_{k-1}} \left( \hat{J}_{\gamma,k-1} (u_{k-1}) - \hat{f}_{k-1}^{\top}u_{k-1} \right)
\]
to obtain $\widetilde{u}_{k-1}$.
\end{itemize}
\State Coarse-to-fine minimization.
\begin{itemize}
\item Prolongate error: $e:= I_{k-1}^k (\widetilde{u}_{k-1} - u_{k-1}^{\nu_1})$.
\item Line search in $e$ direction to obtain a step size $\alpha_k$.
\item Calculate the coarse-to-fine minimization step: $u_k^{\nu_1 +1}= u_k^{\nu_1} + \alpha_k e$.
\end{itemize}
\item Post-optimization:  Apply $\nu_2$ iterations of the optimization algorithm to the problem at level $k$.
\[u_k^{\ell}=S_{opt}(u_k^{\ell-1}),\,\,\,\ell=\nu_1+2,\ldots,\nu_1+\nu_2 +1.\]
\end{algorithmic}
\end{algorithm}
 The algorithm presented above contemplates one iteration of a V-cycle initialized with a  rough estimate of the solution on the finest grid. 
 
\section{Convergence Analysis}\label{sec:convergence}
In this section, we discuss the convergence properties of Algorithm \ref{alg:MGOPT_alg}. Following \cite{Lewis_Nash,Nash}, we can state that the global convergence of the underlying optimization algorithm ensures global convergence of the MG/OPT method. This comes from the fact that if we have an approximate solution (given by the underlying optimization algorithm) at each discretization level, the algorithm generates search directions for problems discretized on finer meshes. Once we have the descent direction, a line search procedure is used to improve the solution at each finer problem. 

In the classical convergence analysis of the MG/OPT methods, the three following conditions are critical (see \cite{Lewis_Nash,Nash,BorziVallejo}).

\begin{enumerate}
\item The discretized objective functional is convex at each level of discretization.
\item The subproblems
\begin{equation}\label{eq:subproblems}
\min_{u_{k-1}} \left( J_{\gamma,k-1} (u_{k-1}) - \tau_{k-1} ^{\top }u_{k-1} \right)
\end{equation}
are solved accurately enough.
\item The transfer operators satisfy  the standard condition $I_k^{k-1} = c \left( I_{k-1}^k \right)^{T}$.
\end{enumerate}
These  conditions are helpful to prove that the search direction provided by the MG/OPT algorithm is indeed a descent direction. For instance, the convexity condition is key to prove that the Hessian is positive definite. However, in our case, the classical Hessian does not exist. Thus, we  use the slant differentiability of the functional $J_{\gamma,h}$ to obtain positive definiteness, see Proposition \ref{prop2.0}.

Next, we comment on the assumptions on our problem. We are considering the subproblems \eqref{eq:subproblems} instead of the subproblems $\displaystyle\min_{u_{k-1}} \left( \hat{J}_{\gamma,k-1} (u_{k-1}) - \hat{f}^{\top}u_{k-1} \right)$ presented in Algorithm \ref{alg:MGOPT_alg}. The artificial term $I_k^{k-1} \hat{f}_k$ is simply introduced in order to facilitate the recursive implementation of the method. In fact, at the very beginning of the algorithm, on the finest mesh, this term is set to zero. In the succeeding iterations, $I_k^{k-1} \hat{f}_k$ corresponds to the recursive sum of the previous restricted $\tau_{k-1}$. Thus, in order to make the subsequent analysis easier, we analyze the subproblems  \eqref{eq:subproblems}. Also, since problem \eqref{eq:probdis} is strictly convex in $V_h^0$, without loss of generality, subproblems \eqref{eq:subproblems} are strictly convex in $V_{k}^0$, for $k=0,\ldots,m$.

Since we perform a few iterations of a suitable globally convergent optimization algorithm ($S_{opt}$), we ensure that the subproblems $\min_{u_{k-1}} \left( J_{\gamma,k-1} (u_{k-1}) - \tau _{k-1}^{\top}u_{k-1} \right)$ are solved accurately enough. Finally, \eqref{eq:condition} yields that $I_k^{k-1} =  \left( I_{k-1}^k \right)^{T}$.  

Let us recall that the search direction for the  MG/OPT algorithm is denoted by $e$ and search directions of the underlying optimization algorithm, (inside the MG/OPT loop) are denoted by $w_k$. 
 
\begin{prop}\label{e_desc_dir}
The search direction $e= I_{k-1}^k (\widetilde{u}_{k-1}-u_{k-1}^{\nu_1})$ is a descent direction for all $k=1, \ldots, m.$
\end{prop}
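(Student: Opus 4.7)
The plan is to reduce the descent condition on $e$ to a quadratic form in $v := \widetilde{u}_{k-1} - u_{k-1}^{\nu_1}$ involving the generalized Jacobian of $J'_{\gamma,k-1}$ along the segment from $u_{k-1}^{\nu_1}$ to $\widetilde{u}_{k-1}$, and then to invoke the positive definiteness proved in Proposition \ref{prop2.0}.

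First I would use the adjoint relation $I_k^{k-1} = (I_{k-1}^k)^{\top}$ from \eqref{eq:condition} together with $e = I_{k-1}^k v$ to rewrite the descent quantity as
\[
\bigl(\nabla \hat{J}_{\gamma,k}(u_k^{\nu_1}) - \hat{f}_k\bigr)^{\top} e = \bigl(I_k^{k-1}\nabla \hat{J}_{\gamma,k}(u_k^{\nu_1}) - I_k^{k-1}\hat{f}_k\bigr)^{\top} v.
\]
Substituting $\tau_{k-1} = \nabla \hat{J}_{\gamma,k-1}(u_{k-1}^{\nu_1}) - I_k^{k-1}\nabla \hat{J}_{\gamma,k}(u_k^{\nu_1})$ and $\hat{f}_{k-1} = I_k^{k-1}\hat{f}_k + \tau_{k-1}$ collapses the right-hand side to $(\nabla \hat{J}_{\gamma,k-1}(u_{k-1}^{\nu_1}) - \hat{f}_{k-1})^{\top} v$. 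Since the coarse subproblem \eqref{eq:subproblems} is assumed to be solved accurately enough for $\widetilde{u}_{k-1}$ to be treated as a stationary point, $\nabla \hat{J}_{\gamma,k-1}(\widetilde{u}_{k-1}) = \hat{f}_{k-1}$, so the descent quantity reduces to $-(\nabla \hat{J}_{\gamma,k-1}(\widetilde{u}_{k-1}) - \nabla \hat{J}_{\gamma,k-1}(u_{k-1}^{\nu_1}))^{\top} v$.

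Next I would apply the mean value theorem for B-differentiable functions (Theorem \ref{thm:mvt}) to the map $J'_{\gamma,k-1}$, which is semismooth by Proposition \ref{prop} and hence B-differentiable by Remark \ref{ssm_is_boul}. This gives
\[
\nabla \hat{J}_{\gamma,k-1}(\widetilde{u}_{k-1}) - \nabla \hat{J}_{\gamma,k-1}(u_{k-1}^{\nu_1}) = \int_0^1 (J'_{\gamma,k-1})'(u_{k-1}^{\nu_1} + tv)(v)\,dt,
\]
and by Proposition \ref{b-prop}, for each $t \in [0,1]$ there exists $M_t \in \partial J'_{\gamma,k-1}(u_{k-1}^{\nu_1}+tv)$ with $(J'_{\gamma,k-1})'(u_{k-1}^{\nu_1}+tv)(v) = M_t v$. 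Consequently the descent quantity equals $-\int_0^1 v^{\top} M_t v\,dt$.

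I expect the main obstacle to be the last step: establishing $v^{\top} M_t v > 0$ for every $M_t \in \partial J'_{\gamma,k-1}$, and not only for the particular slanting function $(J'_{\gamma,k-1})^{\circ}$ treated in Proposition \ref{prop2.0}. The different elements of the Clarke Jacobian differ only through the selection of the characteristic function on the boundary of the active set $A_{\gamma}$, so I would extend the Cauchy-Schwarz estimate used in the proof of Proposition \ref{prop2.0} to cover any such selection, obtaining strict positivity whenever $v \neq 0$. Since the prolongation $I_{k-1}^k$ has full column rank, $v = 0$ is equivalent to $e = 0$; hence whenever a nontrivial direction is produced, $(\nabla \hat{J}_{\gamma,k}(u_k^{\nu_1}) - \hat{f}_k)^{\top} e < 0$, i.e.\ $e$ is a descent direction.
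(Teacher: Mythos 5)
Your argument takes a genuinely different route through the mean value theorem than the paper does, and for the most part it is a viable one. The paper also reduces the descent quantity to $-(H_{k-1}(\widetilde{u}_{k-1})-H_{k-1}(u_{k-1}^{\nu_1}))^{\top}v$ via the adjoint relation \eqref{eq:condition} and Theorem \ref{thm:mvt}, but it then \emph{freezes} the quadratic form at the single point $\widetilde{u}_{k-1}$: it writes the integral as $H_{k-1}^{\circ}(\widetilde{u}_{k-1})(v,v)$ plus a remainder, bounds the remainder by $\epsilon\|v\|^2$ using the semismoothness estimates of Propositions \ref{ssm_prop} and \ref{b-prop2}, and wins by choosing $\epsilon<c/2$ against the coercivity constant $c$ of Proposition \ref{prop2.0}. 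You instead keep the integrand as $v^{\top}M_t v$ with $M_t\in\partial J'_{\gamma,k-1}(u_{k-1}^{\nu_1}+tv)$ (via Proposition \ref{b-prop}) and ask for positive definiteness of \emph{every} Clarke element along the whole segment. You correctly identify this as the new burden your route creates; it is dischargeable, since every element of $\partial_B J'_{\gamma,k-1}$ is $\mathcal{F}''_h$ plus a $\mathcal{G}$-contribution to which the Cauchy--Schwarz estimate of Proposition \ref{prop2.0} applies branchwise, and positive (semi)definiteness survives convex combinations and limits. What your route buys is that you avoid the $o(\|v\|)$ remainder estimate entirely — note that the paper's remainder argument is only valid in the regime where $\|w_{k-1}\|$ is small, a restriction your integrated version does not need. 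What it costs is the stronger, uniform positivity claim, which you have only sketched.

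The one substantive omission is your treatment of the inexact coarse solve. You declare $\nabla\hat{J}_{\gamma,k-1}(\widetilde{u}_{k-1})=\hat{f}_{k-1}$ outright, i.e.\ you prove the proposition only for an exact coarse minimizer. The algorithm never produces one: the paper explicitly carries a residual $z=\nabla J_{k-1}(\widetilde{u}_{k-1})-\tau_{k-1}$ through the computation, so that the descent quantity acquires an extra term $z^{\top}w_{k-1}$, and the second half of the paper's proof is devoted to arguing (via the optimality of $\widetilde{u}_{k-1}$ relative to its initialization $u_{k-1}^{\nu_1}$ and a classical mean value theorem) that $z^{\top}w_{k-1}\lesssim 0$. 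Admittedly the paper's own handling of that term is heuristic, but your proof should at least acknowledge that the sign of the descent quantity is only guaranteed up to a perturbation of size $\|z\|\,\|w_{k-1}\|$, which must be dominated by the coercive term $c\|w_{k-1}\|^2$ you obtain from the positive definite part. As written, your argument silently assumes the perturbation away. The closing observation that $I_{k-1}^k$ has full column rank, so $v=0$ iff $e=0$, is a nice point that the paper leaves implicit.
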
 
\begin{proof}
We have to prove that
\begin{equation}\label{eq:descent}
\nabla J_{\gamma,k} (u_k^{\nu_1})^{\top} e < 0 , \hspace{0.3cm} \forall\, k=1,\ldots, m.
\end{equation}
From this point, for the readability of the proof, we drop the subscript  $\gamma$. First note that, if we solve
\[
\min_{u_{k-1}} \left( J_{k-1} (u_{k-1}) - \tau_{k-1}^{\top}u_{k-1} \right)
\]
exactly, then
\begin{equation*}
 \nabla J_{k-1} (\widetilde{u}_{k-1}) - \tau_{k-1}=0.
\end{equation*}
Since we are solving the problem inexactly (but accurately enough), we then have that,
\begin{equation}\label{eq:a}
 \nabla J_{k-1} (\widetilde{u}_{k-1}) - \tau_{k-1}=z,
\end{equation}
for some $z$ as small as the algorithm accuracy allows for. From Algorithm \ref{alg:MGOPT_alg} we have that 
\[
\tau_{k-1}:= \nabla J_{k-1}(u_{k-1}^{\nu_1}) - I_k^{k-1} \nabla J_k(u_k^{\nu_1}).
\]
Hence, we can rewrite \eqref{eq:a} as follows
\begin{equation}\label{eq:tau}
 \nabla J_{k-1} (\widetilde{u}_{k-1})= \nabla J_{k-1}(u^{\nu_1}_{k-1}) - I_{k}^{k-1}  \nabla J_{k}(u_{k}^{\nu_1}) +z.
\end{equation}
This expression implies that
\begin{equation}\label{eq:descent2}
\begin{array}{rll}
\nabla J_{k}(u_k^{\nu_1})^{\top} e&=&  \nabla J_{k}(u_k^{\nu_1})^{\top} I_{k-1}^k (\widetilde{u}_{k-1}-u_{k-1}^{\nu_1}) \vspace{0.2cm} \\ 
&=&  \nabla J_{k} (u_k^{\nu_1})^{\top} (I_{k}^{k-1})^{\top}(\widetilde{u}_{k-1}-u_{k-1}^{\nu_1}) \vspace{0.2cm} \\ 
&=&(I_k^{k-1} \nabla J_{k}(u_k^{\nu_1}))^{\top}(\widetilde{u}_{k-1}-u_{k-1}^{\nu_1}) \vspace{0.2cm} \\ 
&=& (\nabla J_{k-1}(u_{k-1}^{\nu_1})- \nabla J_{k-1}(\widetilde{u}_{k-1}) + z)^{\top}(w_{k-1}) \vspace{0.2cm} \\ 
&=& (\nabla J_{k-1}(u_{k-1}^{\nu_1})- \nabla J_{k-1}(\widetilde{u}_{k-1}))^{\top}(w_{k-1}) +z^{\top}w_{k-1}, 
\end{array}
\end{equation}
where
\[w_{k-1}=\widetilde{u}_{k-1}-u_{k-1}^{\nu_1}.\]
Next, let us focus on the  first term in the right-hand side of \eqref{eq:descent2}. Hereafter, we use the notation $\nabla J_{k-1}=H_{k-1}$. Then, we have that
\begin{equation}
( \nabla J_{k-1}(u_{k-1}^{\nu_1})- \nabla J_{k-1}(\widetilde{u}_{k-1}))^{\top}w_{k-1} =  H_{k-1}(u_{k-1}^{\nu_1})^{\top}w_{k-1} - H_{k-1}(\widetilde{u}_{k-1})^{\top} w_{k-1}.  
\end{equation}
We know, from Proposition \ref{prop}, that $ H_{k-1}$ is a semismooth function, which, thanks to Remark \ref{ssm_is_boul}, implies that $ H_{k-1}$ is B-differentiable. Thus, Theorem \ref{thm:mvt} yields that
\begin{small}
\begin{equation}\label{eq:left-hand_side}
\begin{array}{cl}
-(H_{k-1}(u_{k-1}^{\nu_1})- H_{k-1}(\widetilde{u}_{k-1}))^\top w_{k-1}=(H_{k-1}(\widetilde{u}_{k-1})- H_{k-1}(u_{k-1}^{\nu_1}))^\top w_{k-1} \vspace{0.2cm}\\\hspace{2.cm}
= \left( \displaystyle \int_0^1 H_{k-1}'(u_{k-1}^{\nu_1} + t (w_{k-1}))(w_{k-1}) dt \right)^{\top} w_{k-1}
=H_{k-1}^{\circ}(\widetilde{u}_{k-1})(w_{k-1},w_{k-1} )\vspace{0.2cm}\\\hspace{4.cm}
+\left( \displaystyle \int_0^1 H_{k-1}'(u_{k-1}^{\nu_1} + t (w_{k-1}))(w_{k-1})- H_{k-1}^{\circ}(\widetilde{u}_{k-1})(w_{k-1}) dt \right)^{\top} w_{k-1},
\end{array}
\end{equation}
\end{small}
where $H_{k-1}'(u_{k-1}^{\nu_1} + t (w_{k-1}))(w_{k-1})$ stands for the directional derivative of the operator $H_{k-1}$ at $(u_{k-1}^{\nu_1} + t (w_{k-1}))$ in the direction $w_{k-1}$. Furthermore,  $H_{k-1}^{\circ}(\widetilde{u}_{k-1})(w_{k-1},w_{k-1} )$  is given by \eqref{eq:slant-derivative_1}. Following \eqref{sec_derivative_decompose} and Proposition \ref{prop2.0}, we know that  $H_{k-1}^{\circ}(\widetilde{u}_{k-1})$ is definite positive. Consequently, there exists a constant $c$ such that
\begin{equation}\label{H_pd}
H_{k-1}^{\circ}(\widetilde{u}_{k-1})(w_{k-1},w_{k-1}) \geq c \|w_{k-1}\|^2.
\end{equation}
Next, let us focus on the last term on the right hand side of \eqref{eq:left-hand_side}. By using the  Cauchy-Schwarz inequality and the fact that $H_{k-1}'(u_{k-1}^{\nu_1} + t (w_{k-1}))(w_{k-1})$ is Lebesgue integrable, we obtain that
\begin{equation}\label{eq:int}
\begin{array}{lll}
\left( \displaystyle \int_0^1 H_{k-1}'(u_{k-1}^{\nu_1} + t (w_{k-1}))(w_{k-1})- H_{k-1}^{\circ}(\widetilde{u}_{k-1})(w_{k-1}) dt \right)^{\top} w_{k-1} \geq \vspace{0.2cm}\\\hspace{1cm}
- \displaystyle \Big| \left( \displaystyle \int_0^1 H_{k-1}'(u_{k-1}^{\nu_1} + t (w_{k-1}))(w_{k-1})- H_{k-1}^{\circ}(\widetilde{u}_{k-1})(w_{k-1}) dt \right)^{\top} w_{k-1} \Big| \geq \vspace{0.2cm}\\\hspace{1.5cm}
- \displaystyle \Big\| \left( \displaystyle \int_0^1 H_{k-1}'(u_{k-1}^{\nu_1} + t (w_{k-1}))(w_{k-1})- H_{k-1}^{\circ}(\widetilde{u}_{k-1})(w_{k-1}) dt \right)  \Big\|  \Big\|w_{k-1} \Big\| \geq \vspace{0.2cm}\\\hspace{2.5cm}
- \displaystyle  \left( \displaystyle \int_0^1  \Big\| H_{k-1}'(u_{k-1}^{\nu_1} + t (w_{k-1}))(w_{k-1})- H_{k-1}^{\circ}(\widetilde{u}_{k-1})(w_{k-1}) \Big\|  dt \right)   \Big\|w_{k-1} \Big\|.
\end{array}
\end{equation}
Moreover, since  $ H_{k-1}$ is semismooth, from Propositions \ref{ssm_prop} and \ref{b-prop2}  we have that
\begin{equation*}
\begin{array}{ll}
\Big\|H_{k-1}'(u_{k-1}^{\nu_1} + t (w_{k-1}))(w_{k-1})- H_{k-1}^{\circ}(\widetilde{u}_{k-1})(w_{k-1}) \Big\|\vspace{0.2cm}\\\hspace{2.cm}
\leq \Big\| H_{k-1}'(u_{k-1}^{\nu_1} + t (w_{k-1}))(w_{k-1})- H_{k-1}'(u_{k-1}^{\nu_1})(w_{k-1}) \Big\| \vspace{0.2cm}\\\hspace{3.5cm}
+  \Big\| H_{k-1}'(u_{k-1}^{\nu_1})(w_{k-1}) - H_{k-1}^{\circ} (\widetilde{u}_{k-1})(w_{k-1})    \Big\| 
=o(\| w_{k-1}\|).
\end{array}
\end{equation*}
Hence, for an arbitrary $\epsilon> 0$, it holds that
\begin{equation*}
-\Big\|H_{k-1}'(u_{k-1}^{\nu_1} + t (w_{k-1}))(w_{k-1})- H_{k-1}^{\circ}(\widetilde{u}_{k-1})(w_{k-1}) \Big\| \geq -\epsilon \|w_{k-1}\|.
\end{equation*}
Thus, from \eqref{eq:int} we have that
\begin{equation}\label{ssm_cond}
\left( \displaystyle \int_0^1 H_{k-1}'(u_{k-1}^{\nu_1} + t (w_{k-1}))(w_{k-1})-  H_{k-1}^{\circ}(\widetilde{u}_{k-1})(w_{k-1}) dt \right)^{\top} w_{k-1} \geq -\epsilon \|w_{k-1}\|^2. 
\end{equation}
Finally, by taking $\epsilon < \frac{c}{2}$ and considering \eqref{eq:left-hand_side}, \eqref{H_pd} and \eqref{ssm_cond}, we conclude that
\begin{equation}\label{cond_fin}
\begin{array}{cl}
-(H_{k-1}(u_{k-1}^{\nu_1})- H_{k-1}(\widetilde{u}_{k-1}))^\top w_{k-1} &\geq c\|w_{k-1}\|^2 - \epsilon \|w_{k-1}\|^2\\
&=(c - \epsilon)\|w_{k-1}\|^2\\
& > \displaystyle \frac{c}{2} \|w_{k-1}\|^2\\
& >0.
\end{array}
\end{equation}
Note that $w_{k-1}=\widetilde{u}_{k-1}-u^{\nu_1}_{k-1} \neq 0$. Consequently, we have that

\begin{equation}\label{eq:sdp}
(H_{k-1}(u_{k-1}^{\nu_1})- H_{k-1}(\widetilde{u}_{k-1}))^{\top}w_{k-1} < 0.
\end{equation}
In order to prove that $e$ is a descent direction, we still need to prove that the third term of the right hand side in \eqref{eq:descent2} satifies that
\begin{equation*}
 z ^{\top}w_{k-1}= z^{\top}(\widetilde{u}_{k-1}-u_{k-1}^{\nu_1}) < 0.
\end{equation*}
Note that $\widetilde{u}_{k-1}$ is the solution of the problem 
\begin{equation*}
\min_{u_{k-1}} \left( \hat{J}_{k-1} (u_{k-1}) - \tau^{\top}u_{k-1}\right).
\end{equation*}
Therefore, 
\begin{equation*}
J_{k-1} (\widetilde{u}_{k-1}) - \tau_{k-1}^{\top}\widetilde{u}_{k-1} < J_{k-1} (u_{k-1}^{\nu_1}) - \tau_{k-1}^{\top}u_{k-1}^{\nu_1},
\end{equation*}
which is equivalent to 
\begin{equation}\label{eq:zw_prima}
J_{k-1} (\widetilde{u}_{k-1}) - J_{k-1} (u_{k-1}^{\nu_1}) <  \tau_{k-1}^{\top}(\widetilde{u}_{k-1} -u_{k-1}^{\nu_1}),
\end{equation}
since  the optimization algorithm was initialized with $u_{k-1}^{\nu_1}$.  Now, using the mean value theorem for differentiable functionals we have that
\begin{equation}\label{eq:zw}
(J_{k-1} (\widetilde{u}_{k-1}) - J_{k-1} (u_{k-1}^{\nu_1}))= \nabla J_{k-1} (\xi)^{\top}(w_{k-1}),
\end{equation}
for some $\xi$ between $\widetilde{u}_{k-1}$ and $u_{k-1}^{\nu_1}$.  Hence, from the inequality  \eqref{eq:zw_prima} and equation \eqref{eq:zw} we have that
\begin{equation*}
\begin{array}{cl}
\nabla J_{k-1}(\xi)^{\top}(w_{k-1}) &< \tau_{k-1}^{\top}(\widetilde{u}_{k-1} - u_{k-1}^{\nu_1})\vspace{0.2cm}\\
 &=\tau_{k-1}^{\top}w_{k-1}
 \end{array}
 \end{equation*}
 which implies that
 \begin{equation}\label{eq:last}
 \nabla J_{k-1} (\xi)^{\top}w_{k-1} -\tau_{k-1}^{\top}w_{k-1} < 0.
 \end{equation}
Next, by approximating $\xi \approx \widetilde{u}_{k-1} $, from \eqref{eq:a} and \eqref{eq:last}, we obtain that

\begin{equation}\label{eq:last_last}
z^{\top}w_{k-1} \lesssim 0.
\end{equation}
Summarizing, \eqref{eq:descent2}, \eqref{eq:sdp} and \eqref{eq:last_last}  imply that
\begin{equation*}
\nabla J_{k}(u_k^{\nu_1})^{\top} e <0, \forall k=0, \cdots, m.
\end{equation*}
and we can conclude that $e$ is a descent direction.
\end{proof}

Finally, thanks to the previous results, we can state and prove the following theorem of convergence for the MG/OPT Algorithm \ref{alg:MGOPT_alg}.

\begin{thm}
Suppose that the following hypotheses are satisfied:
\begin{itemize} 
\item The optimization algorithm, $S_{opt}$, applied to an optimization problem of any resolution, is globally convergent, i.e.,
\begin{equation}\label{global_convergence}
\lim_{r \rightarrow \infty} \| \nabla J_{\gamma,h}(u_{h_r}) \| = 0.
\end{equation}

\item At least one of the parameters $\nu_1$ or $\nu_2$ is positive.
\item The search direction $e= I_{k-1}^k (\widetilde{u}_{k-1}-u_{k-1}^{\nu_1})$ is a descent direction.
\end{itemize}
Then the MG/OPT algorithm is globally convergent in the sense of \eqref{global_convergence}.
\end{thm}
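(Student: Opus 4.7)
My plan is to track the sequence of iterates $\{u_m^{r}\}$ produced at the finest level $k=m$ by successive V-cycles of Algorithm \ref{alg:MGOPT_alg}. Within one V-cycle, three kinds of updates occur at level $m$: $\nu_1$ pre-smoothing steps of the underlying optimizer $S_{opt}$, one coarse-grid correction step of the form $u_m^{\nu_1+1}=u_m^{\nu_1}+\alpha_m e$ where $\alpha_m$ comes from a line search, and $\nu_2$ post-smoothing steps of $S_{opt}$. By the first hypothesis, each $S_{opt}$ step is a descent step, and the Armijo/Wolfe-type line search that accompanies a globally convergent descent method strictly decreases $J_{\gamma,m}$ whenever the current gradient is nonzero. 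By the third hypothesis, the direction $e$ is a descent direction, so the line search in the coarse-to-fine minimization likewise decreases $J_{\gamma,m}$. Consequently $\{J_{\gamma,m}(u_m^{r})\}$ is monotonically non-increasing.

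Since $J_{\gamma,m}$ is strictly convex and coercive on the finite-dimensional space $V_m^0$, it is bounded from below and admits a unique minimizer $\bar{u}_m$. The monotone sequence $\{J_{\gamma,m}(u_m^{r})\}$ therefore converges, and the total decrease over all steps is finite. Since at least one of $\nu_1,\nu_2$ is positive, each V-cycle contributes at least one $S_{opt}$ step at level $m$. On the resulting subsequence $\{u_m^{r_j}\}$, the Armijo-type sufficient-decrease estimate that a globally convergent descent method delivers provides a bound of the form
\[
J_{\gamma,m}(u_m^{r_j+1})-J_{\gamma,m}(u_m^{r_j})\le -c\,\|\nabla J_{\gamma,m}(u_m^{r_j})\|^{2},\qquad c>0.
\]
Summing against the finite total decrease yields $\sum_j\|\nabla J_{\gamma,m}(u_m^{r_j})\|^{2}<\infty$, and hence $\|\nabla J_{\gamma,m}(u_m^{r_j})\|\to 0$.

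To promote this subsequential gradient convergence to the full sequence, and thereby obtain \eqref{global_convergence}, I would use finite-dimensional compactness. Coercivity confines every iterate to the closed, bounded sublevel set $\{v\in V_m^0 : J_{\gamma,m}(v)\le J_{\gamma,m}(u_m^{0})\}$, so any subsequence of $\{u_m^{r_j}\}$ has a further convergent sub-subsequence; continuity of $\nabla J_{\gamma,m}$ (Proposition \ref{prop}) together with $\|\nabla J_{\gamma,m}(u_m^{r_j})\|\to 0$ forces any such limit to be a stationary point, hence to equal $\bar{u}_m$ by strict convexity. Because $\{J_{\gamma,m}(u_m^{r})\}$ is monotone with limit $J_{\gamma,m}(\bar{u}_m)$, the same compactness argument applied to the whole sequence shows each of its cluster points is $\bar{u}_m$, so $u_m^{r}\to\bar{u}_m$, and continuity of $\nabla J_{\gamma,m}$ yields the claim. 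The main obstacle I anticipate is verifying that the specific $S_{opt}$ employed (the preconditioned descent algorithm of section \ref{sec:impl}) indeed supplies the sufficient-decrease estimate used above; if only a weaker Zoutendijk-type inequality $\sum \alpha_{r_j}\|\nabla J_{\gamma,m}(u_m^{r_j})\|^{2}<\infty$ is available, an additional step bounding $\alpha_{r_j}$ from below (using the Lipschitz continuity of $\nabla J_{\gamma,m}$ on bounded sets, which follows from the slant-differentiability established in Proposition \ref{prop}) would be needed, but the same conclusion would follow.
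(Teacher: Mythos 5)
Your proposal is correct and follows the same underlying logic as the paper's proof: at least one smoother iteration per cycle, the coarse-grid correction is a descent step so the functional value is monotonically non-increasing, and global convergence of $S_{opt}$ therefore transfers to MG/OPT. The paper's own argument is only a short qualitative paragraph asserting exactly this; your version supplies the quantitative machinery (sufficient-decrease summation, coercivity, compactness of sublevel sets, and strict convexity to identify cluster points) that the paper leaves implicit, and you correctly flag the one point the paper glosses over entirely, namely that the bare limit hypothesis on $S_{opt}$ must be upgraded to a sufficient-decrease or Zoutendijk-type estimate to survive the interleaving with coarse-grid corrections.
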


\begin{proof}
Let us start by noticing that if $\nu_1$ or $\nu_2$ are positive, at least one iteration of the optimization algorithm is performed at every cycle of  MG/OPT. Thus, an approximate solution at each level of discretization $k$ is obtained. Since the search direction $e=I_{k-1}^k (\widetilde{u}_{k-1}-u_{k-1}^{\nu_1}) $ is a descent direction, the  approximate solution given at each level improves at every cycle of  MG/OPT, i.e., the functional value $\hat{J}_{\gamma,k}(u_k)$ decreases at each cycle after the solution update  $u_k^{\nu_1 +1}= u_k^{\nu_1} + \alpha_k e$. Consequently, as the underlying optimization algorithm is globally convergent, the multigrid optimization algorithm is globally convergent.
\end{proof}

\begin{remark}
The convergence of MG/OPT methods depends on the behavior of the underlying algorithms. In fact, usually the performance of the selected optimization method is the same that is verified for the MG/OPT algorithm (see \cite{Borzi,Lewis_Nash,Nash, BorziVallejo}). Moreover, it is common that for optimization methods designed for nonsmooth problems, only global convergence results can be established. In our particular selection for the underlying algorithm, the global convergence of the preconditioned descent method is guaranteed \cite{Gonzalez1}. However, the rates of this convergence are not investigated. To the best of our knowledge, the use of  semismooth derivatives, in the context of multigrid methods, is a novel perspective. In the present case, we need to analyse the behavior of the ``slant Hessian'' $(J'_{\gamma,h})^{\circ}$ in the context of the underlying algorithms.  This idea is currently under study and it will be developed in future contributions.
\end{remark}

\section{Implementation}\label{sec:impl}
\subsection{Optimization algorithm }
In this section, we briefly discuss the preconditioned descent algorithm proposed in \cite{Gonzalez1}, which is proposed for the underlying optimization algorithm within the MG/OPT algorithm.

Generally speaking, a descent method starts with an initial point $u_0$ and, with information of first order, the algorithm finds directions that lead us to the minimum of the objective functional. Also, the method must find the length of the step, $\alpha_r $, along the chosen direction, $w_r $. The basic idea consists in finding $\alpha_r$ and $w_r$ such that:
\begin{equation*}
J(u_r+\alpha_r w_r)< J(u_r), \text{   for     }  \alpha_r>0 
\end{equation*}
in every iteration of the method. 

In the gradient method, for instance, the search direction $w_r$ is determined by
\[
w_r=- \nabla J(u_r).
\]
On the other hand, for the preconditioned descent algorithm proposed in \cite{Gonzalez1}, the search direction $w_r$  is determined by solving the following variational equation 
\[
P_r(w_r,v)=- \nabla J(u_r)^{\top} v_r,  \,\, \forall v_r \in V_h^0,
\]
where the form $P_r:V_h^0 \times V_h^0 \rightarrow \mathbb{R}$ is chosen as a variational approximation of the $p$-Laplacian. 

The algorithm reads as follows
\begin{algorithm}[H]
\caption{General Preconditioned descent algorithm}\label{alg:PCDA}
\begin{algorithmic}[1]
\State Initialize $u_0 \in V_h^0 $ and set $r=0$.
If {$\nabla J(u_r)=0$}, STOP. Otherwise:
\State Find a descent direction $w_r $ by solving the following equation
\begin{equation*}
P_r(w_r,v_r)=- \nabla J(u_r)^{\top}v_r,  \,\,\, \forall v_r \in V_h^0,
\end{equation*}
\hbox{          } if {$1<p<2$}, 
\[
P_r(w_r,v_r)=\int_{\Omega_h} (\epsilon + |\nabla u_r|)^{p-2} \nabla w_r \nabla v_r \,\,dx, \,\, \forall v_r \in V_h^0,
\]
\hbox{          } else if {$p\geq 2$}, 
\[
P_r(w_r,v_r)=\int_{\Omega_h}  \nabla w_r\nabla v_r \,\, dx,  \,\, \forall v_r \in V_h^0,
\]
\hbox{          } end.

\State Perform a line search algorithm to obtain $\alpha_r$.
\State Update $u_{r+1}:=u_r + \alpha_r w_r$ and set $r=r+1$.
\end{algorithmic}
\end{algorithm}
The global convergence and the stability of this Algorithm is guaranteed both in finite and infinite dimension settings. For a deeper analysis of this algorithm, we refer the reader to \cite{Gonzalez1}.
 
\subsection{Line search technique} \label{sec:line_search}
In this section, we describe the line search algorithm which will be used in the implementation of Algorithm \ref{alg:MGOPT_alg}. This algorithm uses polynomial models of the objective functional for backtracking, and it was originally proposed in \cite[Sec. 6.3.2]{Dennis}. 

The algorithm reads as follows

\begin{algorithm}[H]\label{algo:armijo}
\caption{Line search algorithm by polynomial models}
Let $\sigma_1\in (0,\frac{1}{2})$ and set $\alpha_0=1$. 
\begin{algorithmic} [1] 
\item Decide wheter $J(u_r + \alpha_r)> J (u_r) + \sigma_1 \alpha_r \nabla  J(u_r)^\top w_r$ holds. If so, STOP and set $\alpha_r=\alpha_0$. If not:
\item Decide whether the step length is too small. If so, STOP and terminate algorithm: routine failed to locate satisfactory $x_{r+1}$ sufficiently distinct from $x_r$. If not:
\item Decrease $\alpha$ by a factor between 0.1 and 0.5 as follows:
\item On the first backtrack: set $\alpha_r:=\tilde{\alpha}_2= \textrm{argmin} \,m_2(\alpha)$, but constrain the new $\alpha_r$ to be $\geq 0.1$.
\item On all the subsequent backtracks: set $\alpha_r:= \tilde{\alpha}_3 =\textrm{argmin}\, m_3(\alpha)$, but constrain the new $\alpha_r$ to be in $[0.1\alpha_p\,,\, 0.5 \alpha_p]$.
\item Return to step 1.
\end{algorithmic}
\end{algorithm}
If we set
\[
\varphi_r(\alpha):=J(u_r+\alpha w_r),
\]
then the quadratic model $m_2$ is given by
\[
m_2(\alpha):= (\varphi_r(1)-\varphi_r(0)-\varphi_r'(0))\alpha^2 + \varphi_r'(0) \alpha + \varphi_r(0),
\]
while the cubic model $m_3$ is given by
\[
m_3(\alpha):= c \alpha^3 + d\alpha^2+\varphi_r'(0) \alpha + \varphi_r(0),
\]
where
\begin{equation*}
\binom{c}{d}=\frac{1}{\alpha_p - \alpha_{2p}} \left( \begin{array}{cc}
\frac{1}{ \alpha_{p}^2} &  \frac{-1}{ \alpha_{2p}^2}  \\
\frac{- \alpha_{2p}}{\alpha_p^2} & \frac{\alpha_p}{\alpha_{2p}^2}   \end{array}  \right) \left( \begin{array}{c}
 \varphi_r(\alpha_p)-\alpha_r(0)-\alpha_r'(0) \alpha_p\\
\alpha_r(\alpha_{2p}) - \alpha_r(0)- \alpha_r'(0) \alpha_{2p}.  \end{array}  \right)
\end{equation*}
and $\alpha_p$ and $\alpha_{2p}$ are the last two previous values of $\alpha_k$. For further details and examples see \cite[pp. 126-129]{Dennis} and the references therein.

\section{Applications to the Numerical Simulation of Viscoplastic Flow}\label{sec:flow}
In this section, we discuss the application of the MG/OPT algorithm to the numerical simulation of the steady flow of viscoplastic fluids. These materials are characterized by the existence of a yield stress \cite{Chhabra,Duvant-lions,Huilgol,Huilgol1}. This implies that the viscoplastic material exhibits no deformation if the shear stress imposed does not exceed the yield stress, i.e., it behaves like an ideal rigid solid. If the shear stress overpasses the yield stress, the material will deform like a nonlinear viscous fluid in most of the cases. In fact, Herschel - Bulkley and Casson fluids present a nonlinear stress-shear rate, while Bingham fluids behave like a viscous fluid with linear stress-shear rate (see Figure \ref{fig:vp_models}). Summarizing, the existence of the yield stress causes rigid zones and yielded zones in the flow.
\begin{figure}[H] 
\begin{center}
\includegraphics[scale=0.4]{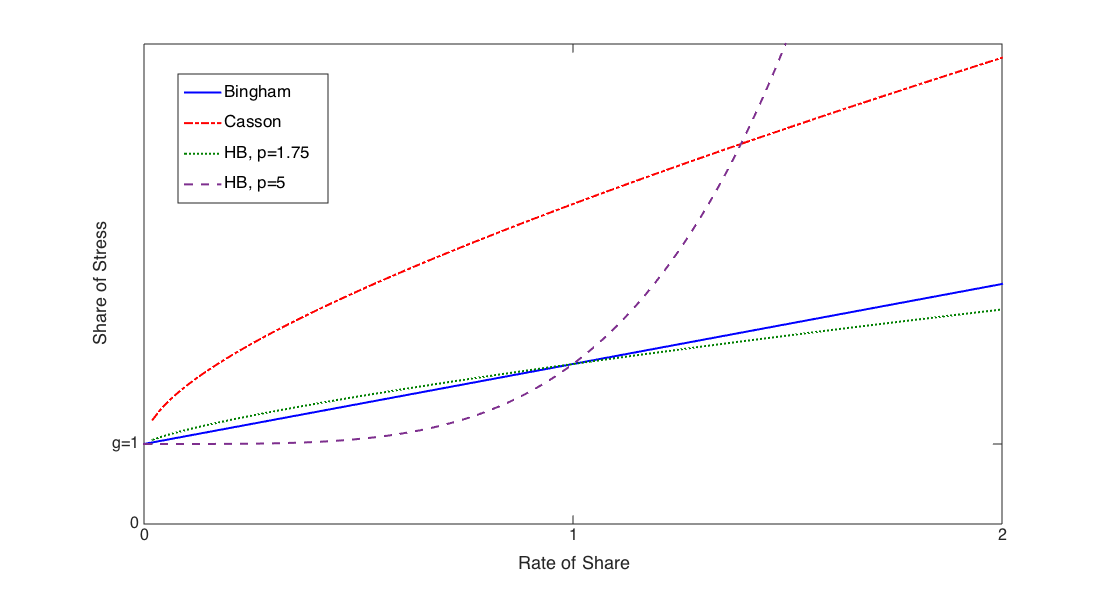}
\centering
\caption{Viscoplastic models}\label{fig:vp_models}
\end{center}
\end{figure}

In this work we focus on the stationary and laminar flow of a viscoplastic fluid in a cylindrical pipe under the effect of a drop in pressure. We consider three classic models for these fluids: Herschel-Bulkley, Bingham and Casson. This kind of flow is a simplified problem in which we can assume that all velocity fields only have a non-zero component in the axial direction. Therefore, by assuming that the velocity fields vanish on the boundary of the pipe (the so called adhesion condition) and that $f$ represents the constant pressure drop, it is well known that the velocity field across the cross-section of the pipe solves the following optimization problem.
\begin{equation}\label{eq:final}
\underset{u_h\in V_h^0} {\min} J_h(u_h):= \phi(\nabla u_h) + \int_{\Omega_h}\psi_\gamma(\nabla u_h)\,dx - \int_{\Omega_h} f u_h\,dx,
\end{equation}
where
\begin{equation*}
\phi(\nabla u_h) = \begin{cases} \begin{array}{ll}
       \displaystyle\frac{1}{p} \int_{\Omega_h}|\nabla u_h|^{p}\,dx, & \text{for Herschel-Bulkley model} \\
       \displaystyle\frac{1}{2}\displaystyle \int_{\Omega_h}|\nabla u_h|^{2}\,dx, & \text{for Bingham model} \\  
       \displaystyle \frac{1}{2}\displaystyle \int_{\Omega_h}|\nabla u_h|^{2}\,dx + \frac{4}{3} \sqrt{g} \int_{\Omega_h}|\nabla u_h|^{\frac{3}{2}}\,dx, & \text{for Casson model.} \\  
        \end{array}
        \end{cases}
\end{equation*}
This variational formulation is motivated by the necessity of representing the free surface that separates the regions in which the material has yielded from those in which it behaves like a rigid solid. Regarding more general problems, like the flow in 2D and 3D geometries, the variational formulation also leads to optimization problems formulated in divergence-free spaces. This fact suggests that the present methodology can be generalized to these more challenging problems. We refer the reader to \cite{Duvant-lions,Gonzalez1,Huilgol} for a more detailed explanation of the variational approach to the flow problems.

In order to validate our results, we introduce the theoretical velocity distribution, in cylindrical coordinates, for a circular pipe flow. Here $r=r_0=2g$ (See \cite{Huilgol}). 
\begin{itemize}
\item Herschel-Bulkley
\end{itemize}
\begin{equation}\label{analityc_sol_HB}
u(r) = \begin{cases} \begin{array}{ll}
       \displaystyle\frac{(1-r_0)^{1+\beta}}{2^{\beta}(1-\beta)} & 0\leq r \leq r_0 \\\\
       
       \displaystyle\frac{\left( (1-r_0)^{1+\beta}-(r-r_0)^{1+\beta}\right)}{2^{\beta}(1+\beta)}, &  r_0\leq r \leq 1\\ 
        \end{array}
        \end{cases}
\end{equation}
with $\beta=\frac{1}{p-1}$.
\begin{itemize}
\item Bingham
\end{itemize}
\begin{equation}\label{analityc_sol_BH}
u(r) = \begin{cases} \begin{array}{ll}
       \displaystyle\frac{1}{4}(1-r_0)^{2} & 0\leq r \leq r_0 \\\\
       \displaystyle\frac{1}{4}\left( (1-r_0)^{2}-(r-r_0)^{2}\right), &  r_0\leq r \leq 1\\ 
        \end{array}
        \end{cases}
\end{equation}
\begin{itemize}
\item Casson
\end{itemize}
\begin{equation}\label{analityc_sol_CSS}
u(r) = \begin{cases} \begin{array}{ll}
       \displaystyle\frac{1}{12}(3-8r_0^{1/2}+6r_0-r_0^{2}) & 0\leq r \leq r_0 \\
\\
       \displaystyle\frac{1}{4}(1-r^2)-\frac{2}{3}r_0^{1/2}(1-r^{3/2})+\frac{1}{2}r_0(1-r), &  r_0\leq r \leq 1\\ 
        \end{array}
        \end{cases}
\end{equation}

Hereafter, we discuss the performance of the MG/OPT algorithm when applied to the numerical solution of \eqref{eq:final}. All the numerical experiments in this paper are implemented in MATLAB (R2015a) and run on an Intel Core i5 processor with 2.5 GHz. The MG/OPT algorithm was implemented in a V-cycle scheme with $\nu_1$ and $\nu_2$ as the pre and post optimization iterations. In all experiments the right hand side was set $f=1$. We initialize the algorithms with the solution of the Poisson problem $-\Delta u_h=f$. We performed the numerical  experiments in the unit circle domain. Also, we compare the performance of the MG/OPT algorithm with the performance of the preconditioned descent algorithm when solving the same problem in the finest grid. 

Regarding the stopping criteria, we stop the algorithms \ref{alg:MGOPT_alg} and \ref{alg:PCDA} as soon as the expression $\|\nabla J_{k}(u_k)\|$ is reduced by a factor of $10^{-7}$. 

\subsection{Herschel-Bulkley}
 
The Herschel-Bulkley model can be seen as a power-law model with plasticity \cite{Gonzalez1, Huilgol}. The introduction of the parameter $p$, known as the flow index, allows the model to represent the behaviour of several viscoplastic fluids. In fact, if $1<p<2$, the material exhibits a pseudoplastic or shear-thinning behaviour. On the other hand, if $ p>2$ the fluid behaves as a shear-thickening material (see Figure \ref{fig:vp_models}). This versatility makes the Herschel-Bulkley model to be widely used to simulate several materials including liquid foams, whipped cream, fluid foods, silly putty and some polymers \cite{Chhabra}. 

\subsubsection*{Experiment 1 (case $1 < p < 2$)}  

In this experiment we compute the solution for problem \eqref{eq:final} with $p=1.75$ and different values of $g$, ranging from $g=0$ to $g=0.4$. We fix the preconditioned descent algorithm (see \cite{Gonzalez1}) as the underlying optimization algorithm $S_{opt}$, with set $\epsilon=10^{-6}$. MG/OPT V-cycles are carried out in 5 grids, with $8321$ nodes in the finest grid and $41$ nodes in the coarsest one. 

In Table \ref{table:exp1_hb}, we compare the MG/OPT algorithm and the preconditioned descent algorithm. Following \cite{Nash}, we stop the MG/OPT Algorithm \ref{alg:MGOPT_alg} at a given number of iterations and compare the numerical performance with the preconditioned descent algorithm. In particular, we compare the error $Err_s$, which is defined as the difference between the current solution and a highly accurate solution, measured in the discrete $L_2$-norm. The accurate solution is obtained by running the globally convergent preconditioned descent algorithm on the finest mesh.  We report $Err_s$ in the third and seventh columns for MG/OPT and the preconditioned algorithm, respectively. 
Also, since we know the analytical solution for the velocity distribution along the radio in the circular pipe \eqref{analityc_sol_HB}, in the fifth and eight columns of Table \ref{table:exp1_hb}  we present the error for the  constant plug flow velocity , i.e., $ Err_{pf}=|u_{\text{a}} - u_{\text{num}}|$, where $u_{\text{a}}$ is the analytical plug flow velocity calculated with \eqref{analityc_sol_HB} and $u_{\text{num}}$ stands for the plug flow velocity calculated numerically with both algorithms.  With $g=0$, MG/OPT performs less iterations than the preconditioned descent algorithm. However, since at each iteration or V-cycle of MG/OPT the algorithm performs $\nu_1+\nu_2=4$ optimization steps at the finest grid, the total number of optimization steps in the finest grid is not reduced substancially in comparison with the preconditioned descent algorithm. In the subsequent cases with a larger $g$, MG/OPT iterations are about one seventh of the preconditioned descent algorithm iterations. Also, in comparison with the underlying optimization algorithm, MG/OPT reduces by almost half the number of optimization steps in the finest grid at every stage of the computation, with $g=0.2$. This behaviour is replicated in the advanced stages of MG/OPT with $g=0.4$. 

\begin{table}[H]
\begin{center}
\begin{tabular}{c l l l l |l l l l l}
  \hline
        \multicolumn{5}{c|}{MG/OPT ( $\nu_1=\nu_2=2$)} 
          & \multicolumn{4}{|c}{Preconditioned descent algorithm} \\ \hline            
 g&  It. & $Err_{s}$ & Plug Flow vel.& $Err_{pf}$ &  It. & $Err_{s}$ & Plug Flow vel. & $Err_{pf}$  \\  \hline
     &1 &9.69e-09&0.169& 1.42e-04  &5&3.36e-09& 0.1700&2.61e-05\\    
  0  & 2&2.44e-14& 0.1700& 3.57e-05&9&4.97e-14& 0.1700&3.57e-05\\ 
    &  3&1.08e-18& 0.1700& 3.59e-05&12&1.0e-18& 0.1700&3.59e-05\\ \hline
    &1&4.29e-05& 5.91e-02& 7.5e-03&6&3.09e-05& 5.94e-02&7.7e-03\\    
    & 3& 3.94e-06& 5.36e-02& 1.9e-03&31&3.99e-06& 5.35e-02&1.9e-03\\ 
  0.2  & 5&2.82e-06& 5.33e-02& 1.7e-03&40&2.78e-06& 5.33e-02&1.6e-03\\ 
    & 7&1.72e-06& 5.30e-02& 1.3e-03&51&1.78e-06& 5.29e-02&1.3e-03\\ 
    &  9& 1.00e-06& 5.27e-02& 1.1e-03&61&1.06e-06& 5.27e-02&1.0e-03\\ \hline
    &1& 1.6e-04& 1.76e-02& 1.3e-02&5&1.13e-04& 1.5e-02&1.1e-02\\   
    & 3& 1.08e-05& 6.8e-03& 2.0e-03&16&1.02e-05& 6.7e-03&2.7e-03\\ 
  0.4  & 5&3.80e-06& 5.7e-03& 1.7e-03&35&3.73e-06& 5.7e-03&1.7e-03\\ 
    & 7&1.86e-06& 5.3e-03& 1.3e-03&49&1.81e-06& 5.0e-03&1.2e-03\\ 
    &  9& 1.00e-06& 5.0e-03& 1.0e-03&58&1.09e-06& 5.0e-03&1.0e-03\\ \hline
\end{tabular}
\caption{Results of the resolution of problem \eqref{eq:final} with $p=1.75$, $g=0.2$ ,$ \gamma=10^3 $ and $f=1$.} \label{table:exp1_hb}
\end{center}
\end{table}

The resulting velocity fields $u_h$ are displayed in Figure \ref{fig-ex1_vel}.  The regions with constant velocities in the center of the pipe corresponds to the plug flow velocity, where the material behaves like a rigid body. These regions, of course, do not appear with $g=0$.
\begin{figure}[H]
\begin{minipage}[htb]{0.5\textwidth}
\begin{center}
\includegraphics[scale=0.4]{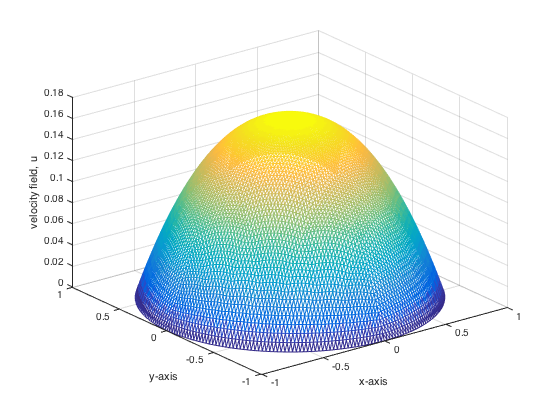}
\end{center}
\end{minipage}\hspace{0.5cm}
\begin{minipage}[htb]{0.5\textwidth}
\begin{center}
\includegraphics[scale=0.4]{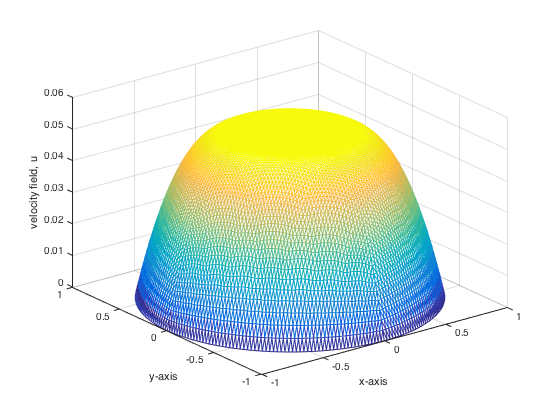}
\end{center}
\end{minipage}\hspace{0.5cm}
\begin{minipage}[htb]{\textwidth}
\begin{center}
\includegraphics[scale=0.4]{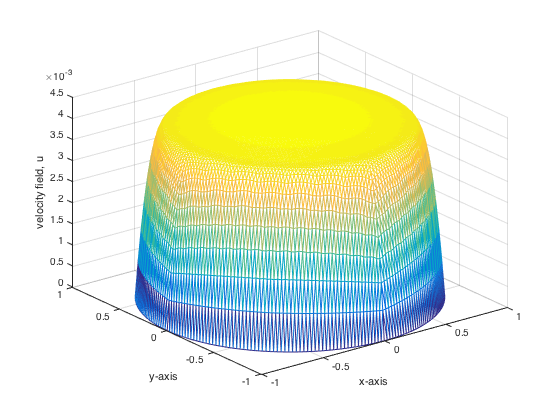}
\end{center}
\end{minipage}
\caption{Calculated velocity $u_h$ for the Herschel-Bulkley model with $p=1.75$. Parameters: $g=0$ (top left),  $g=0.2$ (top right) and $g=0.4$ (bottom), $\gamma=10^3$.}\label{fig-ex1_vel}
\end{figure}

\subsection{Bingham}
Bingham fluids are viscoplastic materials that can be seen as a particular case of the Herschel-Bulkley model when $p=2$. The main characteristic of Bingham fluids is that when the shear stress exceeds the yield stress, the material exhibits a close to linear stress-strain relation (see Figure \ref{fig:vp_models}).

\subsubsection*{Experiment 2} 
The following experiment was carried out in a disk domain with the same characteristics as the previous experiments. We consider the following parameters: $g=0.4$ and $ \gamma=10^3$. In this experiment, we execute a comparison in the same terms as in the previous experiment. In Table \ref{table:bingham_ex11}, MG/OPT with $\nu_1=\nu_2=2$ does not reduce the optimization steps in the finest grid (each iteration of MG/OPT performs $4$ optimization steps in the finest grid) and $Err_{s}$  hardly decreases. Due to this unexpected behaviour, the number of optimization steps in the post-optimization procedure was increased  and decreased in the pre-optimization procedure by setting $\nu_1=1$ and $\nu_2=3$. This  new computation is tabulated In Table \ref{table:bingham_ex12}. In this case, the optimization steps in the finest grid are reduced substantially. The numerical experience in this experiment tells us that the restriction procedure introduce errors to the solution at each level. Then, these errors are reduced more efficiently by increasing the number of optimization steps in the post optimization phase. 
\begin{table}[H]
\begin{center}
\begin{tabular}{c l l l l |l l l l l}
  \hline
        \multicolumn{5}{c|}{MG/OPT ( $\nu_1=\nu_2=2$)} 
          & \multicolumn{4}{|c}{Preconditioned descent algorithm} \\ \hline            
 g&  It. & $Err_{s}$  & Plug Flow vel.&$Err_{pf}$  &  It. & $Err_{s}$  & Plug Flow vel. &$Err_{pf}$  \\  \hline
    &1& 1.0e-03& 4.6e-02& 3.63e-02&4&1.1e-03& 5.36e-02&4.36e-02\\   
    & 3& 7.38e-05& 1.92e-02& 9.2e-03&12&6.74e-05& 2.01e-02&1.01e-02\\ 
  0.4  & 5& 4.47e-05& 1.70e-02& 7.0e-03&15&4.08e-05& 1.74e-02&7.4e-03\\ 
    & 7& 2.96e-05& 1.61e-02& 6.1e-03&25&2.93e-05& 1.62e-02&6.2e-03\\ 
    &  9& 2.59e-05& 1.58e-02& 5.8e-03&33&2.51e-05& 1.57e-02&5.7e-03\\ \hline
\end{tabular}
\caption{Results of the resolution of problem \eqref{eq:final} with $p=2$, $ \gamma=10^3 $ and $f=1$.}\label{table:bingham_ex11}
\end{center}
\end{table}
\begin{tabular}{c l l l l |l l l l l}
  \hline
        \multicolumn{5}{c|}{MG/OPT ( $\nu_1=1,\nu_2=3$)} 
          & \multicolumn{4}{|c}{Preconditioned descent algorithm} \\ \hline            
 g&  It. & $Err_{s}$  & Plug Flow vel.& $Err_{pf}$  &  It. & $Err_{s}$  & Plug Flow vel. &$Err_{pf}$  \\  \hline
    &1& 4.95e-05& 2.3e-02& 1.30e-02&13& 4.80e-05& 1.81e-02& 8.1e-03\\   
    & 3& 2.31e-06& 1.32e-02& 3.2e-03&215&2.32e-06& 1.30e-02&3.0e-03\\ 
  0.4  & 5& 1.29e-06& 1.27e-02& 2.7e-03&244&1.28e-06& 1.27e-02&2.7e-03\\ 
    & 7& 8.51e-07& 1.26e-02& 2.6e-03&260&8.59e-07& 1.25e-02&2.5e-03\\ 
    &  9& 5.75e-07& 1.24e-02& 2.4e-03&273&5.83e-07& 1.24e-02&2.4e-03\\ \hline
\end{tabular}

\begin{table}[H]
\begin{center}

\caption{Results of the solution of problem \eqref{eq:final} with $p=2$, $ \gamma=10^3 $ and $f=1$.} \label{table:bingham_ex12}
\end{center}
\end{table}

 The resulting velocity field is displayed in Figure \ref{fig-ex2_bg_vel}. Since the yield stress $g=0.4$ is high, the plug flow covers a large part of the cross section of the pipe.
 \begin{figure}[H] 
\begin{minipage}[htb]{\textwidth}
\begin{center}
\includegraphics[scale=0.5]{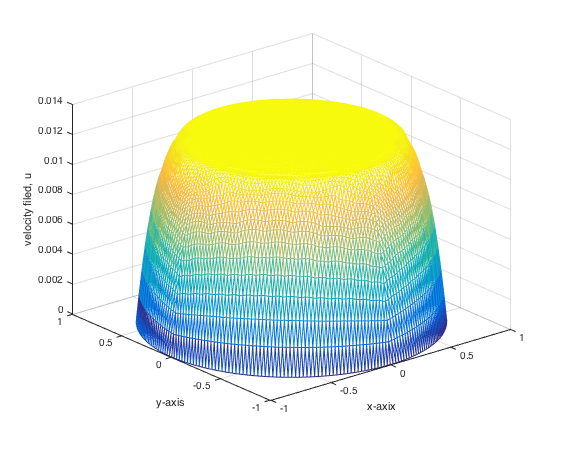}
\caption{\footnotesize{Calculated velocity $u_h$ for Bingham model. Parameters: $ g=0.4$ and $\gamma= 10^3$}}
\label{fig-ex2_bg_vel}
\end{center}
\end{minipage}
\end{figure}

\subsection{Herschel-Bulkley: case $ p > 2$}

We now analyze the behaviour of the MG/OPT algorithm in the case $p > 2$, which usually gives rise to instabilities in the performance of the numerical algorithms (see \cite{BarretLiu,Bermejo,Glowinski,Huang}). 
\subsubsection*{Experiment 3}
In the following experiment we present the performance of the MG/OPT algorithm for Herschel-Bulkley with $p=5$ and $g=0.1$. When $p$ increases, the optimization problem \eqref{eq:final} is difficult to solve \cite{Huang}. Additionally, since we restrict the approximate solution to coarser grids, at the coarsest mesh the restriction procedure fails in approximating efficiently the plug flow. Hence, in this case we only consider MG/OPT in a V-cycle with 3 grids instead of 5 grids (in order to avoid several restriction procedures).  Therefore, we work with $2113$ nodes in the finest grid and $145$ nodes in the coarsest one. The preconditioned descent algorithm \cite{Gonzalez1} is fixed as the underlying optimization algorithm $S_{opt}$. With the previous setting, in Table \ref{table:exp_hb_2} the MG/OPT algorithm (with $\nu_1=\nu_2=2$) does not  reduce the optimization steps in the finest grid  in the first two cycles. However, in the third cycle  MG/OPT performs one quarter of the optimization steps of the preconditioned descent algorithm in the finest grid.
\begin{table}[H]
\begin{center}
\begin{tabular}{c l l l l |l l l l l}
  \hline
        \multicolumn{5}{c|}{MG/OPT ( $\nu_1=\nu_2=2$)} 
          & \multicolumn{4}{|c}{Preconditioned descent algorithm} \\ \hline            
 g&  It. & $Err_{s}$  & Plug Flow vel.& $Err_{pf}$  &  It. & $Err_{s}$  & Plug Flow vel. &$Err_{pf}$  \\  \hline
    &1& 6.67e-05& 4.92e-01& 1.63e-02&4& 6.91e-05& 4.91e-01& 1.69e-02\\   
  0.1  & 2& 6.38e-07& 5.03e-01& 5.6e-03& 9&2.78e-07& 5.01e-01&7.1e-03\\ 
    & 3& 9.31e-09& 5.02e-01& 6.7e-03&51&8.98e-09& 5.01e-01&7.6e-03\\ \hline
\end{tabular}
\caption{Results of the resolution of problem \eqref{eq:final} with $p=5$, $ \gamma=10^3 $ and $f=1$.} \label{table:exp_hb_2}
\end{center}
\end{table}
The resulting velocity field is displayed in Figure \ref{fig-hb_2}. Now we  are in the case of a shear-thickening material. Since the shear stress transmitted by a fluid layer decreases toward the center of the pipe, the velocity takes a conical form. 

\begin{figure}[H] 
\begin{minipage}[htb]{\textwidth}
\begin{center}
\includegraphics[scale=0.5]{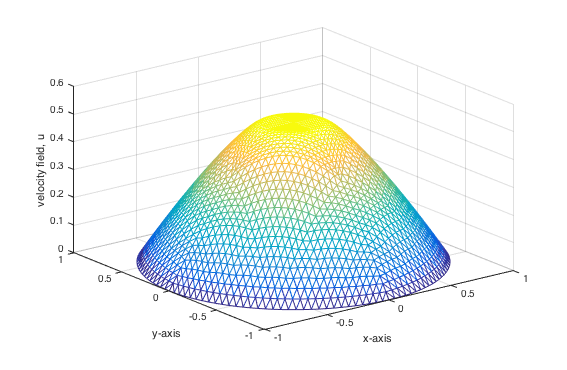}
\caption{\footnotesize{Calculated velocity $u_h$ for Herschel-Bulkley model for $p=5$. Parameters: $ g=0.1$ and  $\gamma= 10^3$}}
\label{fig-hb_2}
\end{center}
\end{minipage}
\end{figure}

\subsection{Casson}
The Casson model is a viscoplastic model that was first developed for modeling printing inks. It has also been used to model food flow behaviour such as chocolate and cocoa products \cite{Rao}, and has been applied to biorheology models like hemodynamics and viscometric flows \cite{Walawender}. The Casson model turns out to be special since it involves the sum of the terms $\frac{1}{2}\displaystyle \int_{\Omega_h}|\nabla u_h|^{2}\,dx + \frac{4}{3} \sqrt{g} \int_{\Omega_h}|\nabla u_h|^{\frac{3}{2}}\,dx$. The first term corresponds to the Bingham structure and the second one to the Herschel-Bulkley structure with $p=\frac{4}{3}<2$. Thus, we decide to use the preconditioned descent algorithm with $\epsilon=10^{-6}$. 

\subsubsection*{Experiment 4}
In the following experiment we present the performance of the MG/OPT algorithm for the Casson problem with parameter $g=0.2$. In this experiment we show the behaviour of MG/OPT with a full multigrid (FMG) scheme \cite{Brandt}. Here, our aim is to show the time reduction when solving the Casson problem with MG/OPT and without it. We compare this scheme with a different number of grids. In Table \ref{table:exp4_casson}, the first row corresponds to the solution computed by the preconditioned descent algorithm in a circle domain in a fine grid with $8321$ nodes. We take as reference the number of iterations and the execution time of this algorithm i.e., the CPU time until the error $Err_{s}$  reaches a tolerance of $1e-07$. The subsequent rows correspond to the solution computed by the MG/OPT $(\nu_1=\nu_2=2)$ algorithm. For each row we present the number of grids used and the number of nodes in the coarsest grid. At each row, a new grid is added to the scheme.

Since we know the analytical solution \eqref{analityc_sol_CSS} in the circular pipe, we present the error $Err_{pf}$ for the plug flow velocity in the fifth column. FMG  performs $1$ V-cycle with $\nu_1=\nu_2=2$ optimization steps in the finest grid, totally we have 4 finest grid optimization steps whereas the preconditioned descent algorithm performed 10 iterations in order to achieve the same error. Let us notice that, as more grids are added to the scheme, we achieve higher CPU time savings in all cases. With 5 grids the time reduction achieved was around 40\%.

\begin{table}[H]
\begin{center}
\begin{tabular}{c c c @{\extracolsep{0.3 cm}}  c c c }
\hline
No.Grids& No. Nodes& It & Plug flow & $Err_{pf}$  & Rel CPU Time \\ \hline
1& 8321 &10 &1.55e-02&  5.03e-04&1 \\
2& 2113 &2 &1.51e-02 & 4.00e-05 & 0.93 \\
3& 545  &3 &1.51e-02 & 2.95e-05 &0.75\\
4& 145  &4 &1.51e-02 & 4.84e-05 &0.70\\
5& 41   &5&1.50e-02& 5.99e-06 &0.69\\ \hline
\end{tabular}
\caption{Results of the resolution of problem \eqref{eq:final} with $ \gamma=10^3 $ and $f=1$.} \label{table:exp4_casson}
\end{center}
\end{table}

\begin{figure}[H] 
\begin{minipage}[htb]{\textwidth}
\begin{center}
\includegraphics[scale=0.4]{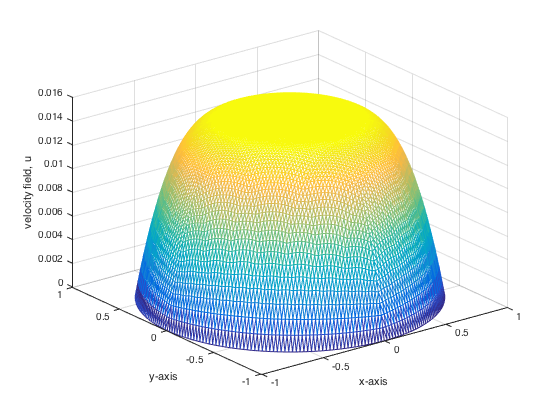}
\caption{\footnotesize{Calculated velocity $u_h$ for  Casson model. Parameters: $ g=0.2, \gamma= 10^3$}}
\label{fig-bing}
\end{center}
\end{minipage}
\end{figure}

\noindent \textrm{\small{The MATLAB codes are available in \tt www.modemat.epn.edu.ec/$\sim$sgonzalez/Publications.html.}}

\section{Conclusions and Outlook}\label{sec:conclusions}
We  proposed and analyzed a multigrid for optimization (MG/OPT) algorithm for the numerical solution of a class of quasilinear variational inequalities of the second kind. We analyzed the variational inequality via the minimization of the associated energy functional.  First, we regularized the non-differentiable part of the energy by using a Huber regularization approach. Next, we proposed a finite element discretization for the problem, and we analyzed the differentiability of the discretized functional. In particular, we proved that the Jacobian of the discretized functional is slantly differentiable. The MG/OPT algorithm was presented and all of the involved transfer operators analyzed. The convergence of the MG/OPT algorithm was established by using the mean value theorem for slantly differentiable functions and the global convergence of the underlying optimization algorithm.  The main issues regarding the implementation of the algorithm were explained, and we described the type of global convergent deepest descent methods used as underlying optimization algorithms. We showed that several classical models for viscoplastic flow correspond to the class of variational inequalities under study. Therefore, we focussed the numerical experiments on this kind of problems. Particularly, we computed the solution for the Herschel-Bulkley, Bingham and Casson models. In all the experiments presented, we observed CPU-time savings, especially when working on the finest meshes. This showed that the MG/OPT algorithm is indeed an efficient tool for dealing with large scale problems.

In order to continue this research, we consider that the study of a more general class of variational inequalities is an interesting perspective. Since the functional $J_{\gamma,h}$ has a second slant derivative, the development of generalized Newton-type methods as smoothers for the FAS scheme looks like a promising field of research. Finally, the analysis of more challenging problems, such as the $p$-Stokes problem, problems coming from glaceology, geophysics and hemodynamics is of great interest. 

\section*{Acknowledgements} We thank Prof. Juan Carlos De los Reyes (MODEMAT-EPN) for his keen observations in the theory of semismooth functions. We also thank the anonymous referees for many helpful comments which lead to a significant improvement of the article.

\end{document}